\newcommand\R{\mathbb{R}}
\newcommand\N{\mathbb{N}}
\newcommand\G{\mathcal{G}}
\newcommand\U{\mathcal{U}}
\newtheorem{theorem}{Theorem}
\newtheorem{lemma}[theorem]{Lemma}
\newtheorem{prop}[theorem]{Proposition}
\theoremstyle{remark}
\newtheorem{rem}[theorem]{Remark}
\def\norm#1#2{\|#1\|_{L^#2}}
\numberwithin{equation}{section}
\numberwithin{theorem}{section}
\begin{document}

\title[one-dimensional model of chemotaxis]{Spikes and diffusion waves\\  in one-dimensional model of chemotaxis}
\

\author{Grzegorz Karch}
\address{
 Instytut Matematyczny, Uniwersytet Wroc\l awski,
 pl. Grunwaldzki 2/4, 50-384 Wroc\-\l aw, POLAND}
\email{grzegorz.karch@math.uni.wroc.pl}
\urladdr{http://www.math.uni.wroc.pl/~karch}

\author{Kanako Suzuki}
\address{
Institute for International Advanced Interdisciplinary Research,
Tohoku University,
6-3 Aramaki-aza-Aoba, Aoba-ku, Sendai 980-8578, JAPAN}
\email{kasuzu-is@m.tains.tohoku.ac.jp}

\date{\today}

\thanks{
This work was partially supported 
by the Polish Ministry of Science grant N201 022 32/0902, the
Japan-Poland Research Cooperative Program (2008-2009), MEXT the Grant-in-Aid
for Young Scientists (B) 20740087, and 
the Foundation for Polish Science operated within the
Innovative Economy Operational Programme 2007-2013 funded by European
Regional Development Fund (Ph.D. Programme: Mathematical
Methods in Natural Sciences).}

\begin{abstract} 
We consider the one-dimensional initial value problem for the
 viscous transport equation with nonlocal velocity
$u_t = u_{xx} - \left(u (K^\prime \ast u)\right)_{x}$ 
with a given kernel $K'\in L^1(\R)$.
We show the existence of global-in-time nonnegative solutions and
we study their large time asymptotics.
Depending on $K'$, we obtain either linear diffusion waves ({\it i.e.}~the fundamental solution of the heat equation)
or nonlinear diffusion waves (the fundamental solution of the viscous Burgers  equation) in  asymptotic 
expansions of solutions as $t\to\infty$. Moreover, for certain aggregation kernels, we show  a concentration
of solution on an initial time interval, which resemble a phenomenon of the spike creation, typical  in chemotaxis models. 
\end{abstract}

\keywords{Nonlocal parabolic equations; parabolic-elliptic system of chemotaxis; 
diffusion waves, large time asymptotics; concentration phenomenon.
}
\bigskip

\subjclass[2000]{ 35Q, 35K55, 35B40}

\maketitle


\section{Introduction}

In this work, we study the large time behavior  of solutions to the one-dimensional initial value problem
\begin{align}
&u_t = u_{xx} - \left(u (K^\prime \ast u)\right)_{x} \quad \text{for}\ x
 \in \R,\ t > 0, \label{intro-eq1}\\
&u(x, 0) = u_0 (x)\quad \text{for}\ x \in \R, \label{intro-eq2}
\end{align}
where the {\it aggregation} kernel $K^\prime \in L^1 (\R)$ is a given
function (the symbol ``$*$'' denotes the convolution with respect to the variable
$x$) %
and the initial datum $u_0 \in L^1 (\R)$ is nonnegative. %
Such models have been used to describe  a collective motion and aggregation phenomena in biology and
mechanics of continuous media.
In this case, the unknown function $u = u(x, t)\geq 0$ 
is either the population density of  a species or the density of particles in a granular media.
The kernel $K'$ in  \eqref{intro-eq1} can be understood as the
derivative of a certain function $K$, that is, $K^\prime$ stands for $dK/dx$.  We use 
this notation to emphasize that cell interaction described by equation \eqref{intro-eq1} takes place by
means of a potential $K$ (see \cite{BV05} for derivation of such equations).

Equation \eqref{intro-eq1} and its generalizations, considered either in the whole space or in a bounded domain, 
have been studied in several recent works.
First, notice that in  the particular case of  $K (x) = e^{-|x|}/2$, equation
\eqref{intro-eq1} corresponds to the  one-dimensional
parabolic-elliptic system of chemotaxis
\begin{equation}
u_t = u_{xx} - (u v_x)_x ,  \quad  -v_{xx} +v= u , \qquad x\in\R, \; t>0. \label{chemo}
\end{equation}
Indeed, since 
$K(x) = e^{-|x|}/2$ is the fundamental solution of the operator
$-\partial_x^2 + {\rm Id}$, one can rewrite the second equation in \eqref{chemo} as $v=K*u$. Now, 
it suffices to substitute this formula into the first equation in \eqref{chemo} to obtain \eqref{intro-eq1}.
We refer the reader to the 
 works  \cite{BK10, BKL09, H07, BDP, Kozono-Sugiyama,Murray} (this list is far from being complete)
for mathematical results  and for additional references 
 on systems modeling chemotaxis.
 
 Next, one should mention 
  the inviscid aggregation equation
$u_t+\nabla\cdot (u\nabla K*u)=0$, describing the evolution of a cell density, which was derived as a macroscopic 
equation from the so-called ``individual cell-based model'' \cite{S00, BV05}. Here, the reader is referred to 
 \cite{ BCL1, BL1, BV05, BV06, Lau1}
 for recent results on the existence and the blowup of solutions to the initial value problem for the
inviscid aggregation equation.

To handle diffusion phenomena, equations describing aggregation are supplemented with additional terms. One possible approach 
is to add a nonlinear term modeling a degenerate diffusion as in the porous medium equation, see {\it e.g.}~\cite{V02}. 
Results and other references on
 the chemotaxis model with  a degenerate diffusion can be found in \cite{S09, BCL09}
and on more general aggregation equations in \cite{IN87, TBL06, LZ10}.

In several cases, the mechanism of spreading out of organisms resembles a L\'evy flight, hence, the anomalous diffusion is
better modeled by  nonlocal pseudodifferential operators. Recent works \cite{BK10,BKL09,E06,Li-Rod1} contain several mathematical results 
on a chemotaxis system and on an aggregation equation with either the fractional Laplacian or a more general
L\'evy operator.

In our recent work \cite{KS09}, 
we have studied the multidimensional version of problem \eqref{intro-eq1}-\eqref{intro-eq2}
and we have answered questions how singularities of the gradient of the aggregation kernel $K$ 
influence on the existence and the nonexistence of global-in-time solutions.
In this paper, we complete those results in the one-dimensional case by
proving the existence of global-in-time solutions for every $K^\prime
\in L^1 (\R)$, %
and by studying their large time asymptotics. %
We show that asymptotic profiles, as  $t \to \infty$, of solutions to \eqref{intro-eq1}-\eqref{intro-eq2} 
with general $K^\prime \in L^1 (\R)$
are given either by the fundamental solution of the linear heat equation or by %
self-similar solutions of the viscous Burgers equation. Moreover, under another set of assumptions (which are
satisfied {\it e.g.}~by the kernel $K(x) = e^{-|x|}$), 
we prove a certain concentration property solutions to  \eqref{intro-eq1}-\eqref{intro-eq2} 
which can be observed on an initial time interval.

\subsection*{Notation}
In this work, the usual norm of the Lebesgue space 
$L^p (\R)$ is denoted by $\|\cdot\|_p$ for any $p \in [1,\infty]$ and $W^{k,p}(\R)$ is the corresponding Sobolev space. 
$C^\infty_c (\R)$ denotes the set of smooth and compactly
 supported functions. 
The constants (always independent
of $x$ and $t $) will be
denoted by the
same letter $C$, even if they may vary from line to line.
Sometimes, we write,  {\it e.g.},  $C=C(\alpha,\beta,\gamma, ...)$ when we want to
emphasize
the dependence of $C$ on parameters~$\alpha,\beta,\gamma, ...$.

\section{Results and comments}

We begin our study of  properties of solutions to the initial value problem \eqref{intro-eq1}--\eqref{intro-eq2}
by showing  that it
  has  a unique and global-in-time solution
for a large class of initial conditions and aggregation kernels. 
The results on the global-in-time existence and regularity of solutions from  the following theorem
are more-or-less standard and we state them for the completeness of the exposition. 
They are also not surprising because it is well-known that solutions to the one-dimensional  Patlak-Keller-Segel  model of
chemotaxis do not blow up in finite time (see {\it e.g.}~\cite{OY01,HP04,E06}).
On the other hand, the fact that all solutions to problem 
\eqref{intro-eq1}--\eqref{intro-eq2}  are uniformly bounded in time as the $L^p$-valued functions 
(see Proposition~\ref{sec-ex-prop1}, below)
seems to be  new 
and is a first step towards the understanding of the large time behavior of solutions.

\begin{theorem}[Existence of global-in-time solution]\label{sec2-th0}
Assume that  
\begin{align}
&K^\prime \in L^1 (\R) ,\label{sec2-eq1}\\
&u_0 \in L^1 (\R)
 \cap L^q (\R)\ \text{for
 some}\ 2 \le q < \infty. \label{sec2-eq2}
\end{align}
Suppose also  that $u_0\geq 0$.
Then the initial value problem
\eqref{intro-eq1}--\eqref{intro-eq2} has a unique, nonnegative, global-in-time solution
$u \in C([0,\infty), L^1(\R)\cap L^q(\R))$. 
This solution has the following regularity property
$
u \in C^1((0, \infty), L^p(\R))\cap C((0, \infty), W^{2,p}(\R )))
$
  for each $p\in [1,\infty]$. 
Moreover, the solution
 conserves the integral (``mass'')
\begin{equation}\label{mass}
M\equiv \|u(t)\|_1=\int_\R u(x,t)\;dx = \int_\R u_0(x)\;dx= \|u_0\|_1\quad \mbox{for all} \quad t\geq 0,
\end{equation}
and for each $t_0>0$ and all $p\in (1,\infty)$ we have $\sup_{t>t_0} \|u(t)\|_p<\infty$.
\end{theorem}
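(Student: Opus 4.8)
The plan is to construct the solution as a fixed point of the Duhamel (mild) formulation and then to promote local existence to global existence by an a priori bound, with parabolic smoothing used at every stage.

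First I would rewrite \eqref{intro-eq1}--\eqref{intro-eq2} as the integral equation
\[
u(t)=e^{t\partial_x^2}u_0-\int_0^t\partial_x e^{(t-s)\partial_x^2}\big[u(s)\,(K'\ast u(s))\big]\,ds,
\]
and set up a contraction in $C([0,T];L^1(\R)\cap L^q(\R))$. The two ingredients are Young's convolution inequality $\|K'\ast u\|_p\le\|K'\|_1\|u\|_p$, which controls the nonlocal velocity in every $L^p$ using only $K'\in L^1$, and the heat-semigroup smoothing $\|\partial_x e^{\tau\partial_x^2}g\|_p\le C\tau^{-1/2-\frac12(1/r-1/p)}\|g\|_r$. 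Estimating the flux by $\|u(K'\ast u)\|_{q/2}\le\|K'\|_1\|u\|_q^2$ and taking $r=q/2$ produces the time weight $\tau^{-1/2-1/(2q)}$, which is integrable since $q>1$; this yields a unique local solution on a time interval depending only on $\|u_0\|_q$ and $\|K'\|_1$, and the same estimates give uniqueness. Nonnegativity and \eqref{mass} I would then obtain from structure: freezing the velocity $b=K'\ast u$ turns the equation into the linear advection--diffusion $u_t=u_{xx}-b\,u_x-(\partial_x b)\,u$, so the parabolic maximum principle (applied to a smooth approximation and passed to the limit) preserves $u\ge0$ from $u_0\ge0$, while integrating \eqref{intro-eq1} in $x$ and using that the right-hand side is an exact $x$-derivative, together with the spatial decay coming from $u(t)\in W^{2,p}(\R)$, makes the boundary contributions vanish and gives $\tfrac{d}{dt}\int_\R u\,dx=0$.

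The hard part is the a priori bound that upgrades the local solution to a global one. By the continuation criterion it suffices to bound $\|u(t)\|_q$ on every finite interval, and the genuine obstacle is that $K'\in L^1$ controls $K'\ast u$ only in $L^p$ (never in $L^\infty$), so the aggregation term cannot be treated as a bounded drift. To get around this I would run an $L^p$ energy estimate: multiplying \eqref{intro-eq1} by $p\,u^{p-1}$ and integrating gives the good dissipation $-\tfrac{4(p-1)}{p}\|\partial_x u^{p/2}\|_2^2$ plus the aggregation contribution $(p-1)\int_\R(u^p)_x\,(K'\ast u)\,dx$. I would bound the latter by H\"older and Young's convolution inequality in terms of $\|\partial_x u^{p/2}\|_2$, $\|K'\|_1$ and lower $L^s$-norms of $u$, and then use one-dimensional Gagliardo--Nirenberg inequalities together with the conserved mass $\|u\|_1=M$ to interpolate. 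The point is that in one space dimension the exponents are favorable enough that a small multiple of the dissipation absorbs the derivative factor, leaving a differential inequality for $\|u\|_p^p$ that does not force finite-time blow-up; this is exactly where one-dimensionality and mass conservation are essential, and it is the step I expect to be the main difficulty.

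Finally, regularity and the uniform bound follow by bootstrapping. Instantaneous smoothing in the Duhamel formula gives $u(t)\in L^\infty(\R)$ for $t>0$, after which the mapping properties of $e^{t\partial_x^2}$ into $W^{2,p}(\R)$ and H\"older continuity in $t$ upgrade the solution to $C((0,\infty);W^{2,p}(\R))\cap C^1((0,\infty);L^p(\R))$ for every $p\in[1,\infty]$. For the last claim, writing $u(t)$ by the Duhamel formula started at $t_0/2$ and inserting the uniform-in-time $L^1\cap L^q$ bound (mass conservation for the $L^1$ part, and the global $L^q$ bound just obtained) into the integrable smoothing estimates yields $\sup_{t>t_0}\|u(t)\|_p<\infty$ for all $p\in(1,\infty)$.
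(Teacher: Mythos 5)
Your local existence argument, the treatment of nonnegativity and mass conservation, the regularity bootstrap, and the final $\sup_{t>t_0}\|u(t)\|_p$ step all match the paper (Steps 1--3 of its proof and Lemma \ref{local-ex}). The gap is in the step you yourself flag as the main difficulty: the claim that ``the exponents are favorable enough that a small multiple of the dissipation absorbs the derivative factor'' is false at the bottom of the induction, i.e.\ at $p=2$, the first norm above the conserved $L^1$ norm. There the energy identity reads
\[
\frac12\frac{d}{dt}\|u(t)\|_2^2=-\|u_x(t)\|_2^2+\int_\R u\,u_x\,(K'\ast u)\,dx ,
\]
and any chain of H\"older, Young and one-dimensional Gagliardo--Nirenberg inequalities that uses only $\|u_x\|_2$, $\|K'\|_1$ and $\|u\|_1=M$ necessarily produces
\[
\Big|\int_\R u\,u_x\,(K'\ast u)\,dx\Big|\le C_{GNS}\,\|K'\|_1\,M\,\|u_x(t)\|_2^2 ,
\]
with the dissipation appearing to the \emph{exact} power $2$. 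This is not an accident of a poor choice of exponents: under the scaling $u\mapsto\lambda u(\lambda\cdot)$, $K'\mapsto\lambda K'(\lambda\cdot)$, the quantities $\|u\|_1$ and $\|K'\|_1$ are invariant while the dissipation and the aggregation term both scale like $\lambda^{3}$, so every scale-invariant estimate must be exactly critical. Consequently the coefficient in front of $\|u_x\|_2^2$ is $C_{GNS}\|K'\|_1 M$, which is less than $1$ only if $\|K'\|_1\|u_0\|_1$ is small; for large data the sign of the right-hand side cannot be controlled and the $L^2$ estimate does not close. Your scheme therefore proves global existence only under the smallness hypothesis of Theorem \ref{sec2-th1}, not the full statement of Theorem \ref{sec2-th0}. (The higher steps $p=4,8,\dots$ of your induction are fine: once $\|u\|_{p/2}$ is bounded, the dissipation enters with exponent $4(p+1)/(3p)<2$ and Young's inequality absorbs it; the failure is precisely at the base case.)

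The paper's Proposition \ref{sec-ex-prop1} supplies the missing, scaling-breaking idea: split the kernel as $K'=\widetilde K'+(K'-\widetilde K')$ with $\widetilde K'\in C^\infty_c(\R)$ and $\|K'-\widetilde K'\|_1\le\varepsilon$. The smooth part is estimated through $\|\widetilde K'\ast u\|_\infty\le\|\widetilde K'\|_\infty\|u_0\|_1$ --- the $L^\infty$ norm of $\widetilde K'$ is not scale invariant, which is exactly what breaks criticality --- and the $\varepsilon$-Young inequality turns this contribution into $\varepsilon\cdot(\text{dissipation})+C(\varepsilon)\|\widetilde K'\|_\infty^2\|u_0\|_1^2\|u(t)\|_p^p$; the rough part is estimated by your critical argument, but its coefficient is now $\varepsilon=\|K'-\widetilde K'\|_1$ rather than $\|K'\|_1$, so it is absorbed by the dissipation no matter how large $\|K'\|_1\|u_0\|_1$ is. Finally, the Nash inequality converts the surviving dissipation into $-C\big(\|u(t)\|_p^p\big)^3$, yielding the differential inequality $f'\le -Cf^3+Cf+\varepsilon C$, whose nonnegative solutions are bounded uniformly in time; this closes the induction on $p=2^n$. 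Without this splitting (or some other device that breaks the $L^1$-critical scaling), the energy method you describe cannot reach arbitrary $K'\in L^1(\R)$ and arbitrary mass.
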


In our work \cite{KS09},
we have
proved  that
nonnegative solutions to \eqref{intro-eq1}-\eqref{intro-eq2}   
exist globally-in-time, in the case of ``mildly singular'' kernels satisfying 
$K^\prime \in  L^{q}(\R)$  for some $q\in (1, \infty]$,  see \cite[Thm.~2.5]{KS09}.
Theorem \ref{sec2-th0} improves  those results  in the one dimensional case
by showing the global-in-time existence of nonnegative solutions to \eqref{intro-eq1}--\eqref{intro-eq2} for every  kernel
$K'\in L^1(\R)$. %
This result holds true, in particular, for every ``strongly singular'' kernel \cite{KS09} satisfying
$  K'\in L^1(\R)\setminus L^q(\R)$ for any $q>1$.

Next, we state conditions on $K'$ under which  solutions to \eqref{intro-eq1}-\eqref{intro-eq2} decay as $t\to\infty$.

\begin{theorem}[$L^p$-decay of solutions]\label{sec2-th1}
Assume that $u=u(x,t)$ is a nonnegative solution to problem \eqref{intro-eq1}-\eqref{intro-eq2} with $K'$ and $u_0$ satisfying
 \eqref{sec2-eq1} and \eqref{sec2-eq2}, respectively. There exists
 $D > 0$  and
$ C=C(p, \|K^\prime\|_{1}
 \|u_0\|_{1})>0$  independent of $t$
such that if 
$
\|K^\prime \|_{1} \|u_0\|_{1} \le D,
$
then for each $p \in [1, \infty]$ we have
\begin{align}
\|u(t)\|_p \le C t^{-(1-1/p)/2}  \qquad \mbox{for all}\quad  t > 0.   \label{sec2-eq4}
\end{align}
\end{theorem}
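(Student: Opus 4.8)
The plan is to regard the drift term $(u\,(K'\ast u))_x$ as a perturbation of the linear heat flow and to run a time-weighted bootstrap that closes only because of the smallness hypothesis $\|K'\|_1\|u_0\|_1\le D$. Let $G(x,t)=(4\pi t)^{-1/2}e^{-x^2/(4t)}$ denote the one-dimensional heat kernel. First I would write the solution of \eqref{intro-eq1} through Duhamel's formula, moving the $x$-derivative onto the kernel,
\[
u(t)=G(t)\ast u_0-\int_0^t \partial_x G(t-s)\ast\bigl(u(s)\,(K'\ast u(s))\bigr)\,ds ,
\]
and then reduce the assertion to the single endpoint $p=\infty$. Indeed, mass conservation \eqref{mass} already gives $\|u(t)\|_1=M=\|u_0\|_1$, which is \eqref{sec2-eq4} for $p=1$; and once the bound $\|u(t)\|_\infty\le Ct^{-1/2}$ is available, the interpolation inequality $\|u(t)\|_p\le\|u(t)\|_1^{1/p}\|u(t)\|_\infty^{1-1/p}$ produces \eqref{sec2-eq4} for every $p\in(1,\infty)$. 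Thus everything rests on the $L^\infty$ decay.

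For the $L^\infty$ estimate I would use the standard one-dimensional smoothing bounds $\|G(t)\ast f\|_r\le Ct^{-\frac12(1/m-1/r)}\|f\|_m$ and $\|\partial_x G(t)\ast f\|_r\le Ct^{-\frac12-\frac12(1/m-1/r)}\|f\|_m$, choosing the exponents so that the time singularities stay integrable. The linear term is harmless, $\|G(t)\ast u_0\|_\infty\le Ct^{-1/2}M$. For the nonlinear term the naive $L^1$--$L^\infty$ pairing fails at $p=\infty$ because $\|\partial_x G(t-s)\|_\infty\sim(t-s)^{-1}$ is not integrable near $s=t$; instead I would route the nonlinearity through $L^2$, using Young's inequality $\|K'\ast u\|_2\le\|K'\|_1\|u\|_2$, H\"older's inequality, and the interpolation $\|u\|_2\le M^{1/2}\|u\|_\infty^{1/2}$ (valid since $u\ge0$) to get
\[
\bigl\|u(s)\,(K'\ast u(s))\bigr\|_2\le\|K'\|_1\,\|u(s)\|_\infty\,\|u(s)\|_2\le\|K'\|_1\,M^{1/2}\,\|u(s)\|_\infty^{3/2}.
\]
Writing $E(T):=\sup_{0<t\le T}t^{1/2}\|u(t)\|_\infty$ and using $\|\partial_x G(\tau)\|_2=C\tau^{-3/4}$ together with $\int_0^t(t-s)^{-3/4}s^{-3/4}\,ds=B(\tfrac14,\tfrac14)\,t^{-1/2}$, the Duhamel identity yields, after multiplying by $t^{1/2}$ and taking the supremum, the closed inequality
\[
E(T)\le CM+C\,\|K'\|_1\,M^{1/2}\,E(T)^{3/2}.
\]

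To finish I would run a continuity (barrier) argument on this superlinear inequality. The map $t\mapsto t^{1/2}\|u(t)\|_\infty$ is continuous on $(0,\infty)$ by the regularity in Theorem \ref{sec2-th0}, so $E$ is finite, continuous and non-decreasing in $T$; moreover the extra integrability $u_0\in L^q$ with $q\ge2$ gives $t^{1/2}\|G(t)\ast u_0\|_\infty\le Ct^{\frac12(1-1/q)}\|u_0\|_q\to0$ as $t\to0^+$, so the linear contribution to $E(T)$ vanishes and the superlinearity forces $\lim_{T\to0^+}E(T)=0$. Setting $a=CM$ and $b=C\|K'\|_1M^{1/2}$, the function $x\mapsto a+bx^{3/2}-x$ has two positive roots $x_1<x_2$ precisely when $b\,a^{1/2}=C^{3/2}\|K'\|_1\|u_0\|_1$ lies below an absolute constant, i.e. when $\|K'\|_1\|u_0\|_1\le D$ for a suitable $D>0$; the inequality above forces $E(T)\notin(x_1,x_2)$, and since $E$ is continuous, non-decreasing and starts below $x_1$, it cannot cross the gap, so $E(T)\le x_1\le CM$ for all $T>0$. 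This is the desired $L^\infty$ bound, and the theorem follows.

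The hard part will be the closing step rather than the estimates themselves: the $3/2$-power feedback does not self-improve, so the argument succeeds only through the smallness condition together with the barrier/continuity mechanism, and the latter genuinely requires the a priori finiteness and continuity of $E(T)$ and the control of $\lim_{T\to0^+}E(T)$ — which is exactly where the hypothesis $u_0\in L^q$, $q\ge2$, enters, via the faster-than-$t^{-1/2}$ smoothing of the linear part near $t=0$. The only other point demanding care is the choice of exponents in Young's inequality, so that both time singularities $(t-s)^{-3/4}$ and $s^{-3/4}$ remain integrable; the endpoint $p=\infty$ is where a careless pairing breaks down, and recovering it through $L^2$ and the derivative gain of $\partial_x G$ is the technical heart of the nonlinear estimate.
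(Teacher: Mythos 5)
Your estimates are all correct as computations: the Duhamel representation, the reduction to $p=\infty$ by interpolation against the conserved mass, the routing of the nonlinearity through $L^2$ via $\|u(K'\ast u)\|_2\le \|K'\|_1 M^{1/2}\|u\|_\infty^{3/2}$, the bound $\|\partial_x \G(\tau)\|_2=C\tau^{-3/4}$, the Beta-function identity, and the algebra showing that the barrier function $x\mapsto a+bx^{3/2}-x$ has two positive roots exactly when $a^{1/2}b$, i.e.\ a power of $\|K'\|_1\|u_0\|_1$, is below an absolute constant. This is also a genuinely different route from the paper, which never works at $p=\infty$ directly: it tests the equation with $u$, absorbs the nonlinear term into the dissipation using the Gagliardo--Nirenberg--Sobolev inequality \eqref{sec3-eq2} under the smallness condition $\|K'\|_1\|u_0\|_1<1/C_{GNS}$, converts the result into the ODE $\frac{d}{dt}\|u\|_2^2+C\|u_0\|_1^{-4}\bigl(\|u\|_2^2\bigr)^3\le 0$ via the Nash inequality, and only then feeds $\|u(t)\|_2\le Ct^{-1/4}$ into Duhamel to reach the other exponents.

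However, your closing step has a genuine gap. The barrier argument needs (a) $E(T)<\infty$ for every $T$ and (b) $E(T)$ below the forbidden gap as $T\to 0^+$, and neither is established by what you wrote. For (a): continuity of $t\mapsto t^{1/2}\|u(t)\|_\infty$ on the \emph{open} interval $(0,\infty)$ does not bound the supremum over $(0,T]$; Theorem \ref{sec2-th0} gives no rate for $\|u(t)\|_\infty$ as $t\to 0^+$, so a priori this quantity could blow up there and $E(T)$ could be identically $+\infty$. For (b): the superlinearity does \emph{not} force $\lim_{T\to 0^+}E(T)=0$. Setting $\epsilon(T):=\sup_{0<t\le T}t^{1/2}\|\G(t)\ast u_0\|_\infty$ and $L:=\lim_{T\to 0^+}E(T)$ (which exists by monotonicity, granted finiteness), passing to the limit in $E(T)\le \epsilon(T)+bE(T)^{3/2}$ yields only $L\le bL^{3/2}$, i.e.\ the dichotomy $L=0$ or $L\ge b^{-2}$; since $b^{-2}$ lies \emph{above} the larger root $x_2$, the second alternative is precisely the bad branch your continuity argument must exclude, and the vanishing of the linear contribution says nothing about the nonlinear one. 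What is actually needed is a quantitative smoothing estimate for the nonlinear solution itself near $t=0$, for instance $\|u(t)\|_\infty\le Ct^{-1/(2q)}$ on $(0,T]$: this follows from the Duhamel formula, the bound $\sup_{0\le s\le T}\|u(s)\|_q<\infty$ from the local theory, the estimate $\|u(K'\ast u)(s)\|_q\le\|K'\|_1\|u(s)\|_q\|u(s)\|_\infty$, and the singular Gronwall lemma (exactly the device used in the paper's Lemma \ref{sec4-lem2}). Since $q\ge 2$, it gives $t^{1/2}\|u(t)\|_\infty\le Ct^{1/2-1/(2q)}\to 0$, which supplies both (a) and (b) and closes your proof; this is also where the hypothesis $u_0\in L^q$, $q\ge 2$, genuinely enters your argument. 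Note that the paper's energy route avoids this issue altogether: the ODE $y'\le -cy^3$ integrates to $y(t)\le(2ct)^{-1/2}$ for all $t>0$ regardless of the size of $y(0^+)$, so no analysis near $t=0$ is required.
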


\begin{rem}
In the proof of Theorem \ref{sec2-th1}, we choose $D=1/C_{GNS}$, where $ C_{GNS}$ is the optimal constant 
in  the Gagliardo-Nirenberg-Sobolev inequality  \eqref{sec3-eq2}.
\end{rem}

We do not know if the decay estimate \eqref{sec2-eq4} holds true for every nonnegative solution which does not necessarily
satisfy the condition  
$
\|K^\prime \|_{1} \|u_0\|_{1} \le D.
$
Reasons that such decay estimates may fail for certain kernels $K'$ and for initial conditions with large mass can 
be found below in Theorem \ref{thm:spike} and in the discussion following it.
Now, however, we  prove that  estimates \eqref{sec2-eq4} hold true for each solution   
of problem \eqref{intro-eq1}-\eqref{intro-eq2} which tends to zero as $t\to\infty$ without {\it a priori} assumed decay rate.

\begin{theorem}\label{thm:Lp:decay}
Let the assumptions of Theorem \ref{sec2-th0} hold true. Assume, moreover, that there exists $p_0\in (1,\infty]$ such that 
$\|u(t)\|_{p_0}\to 0$ as $t\to\infty$.
 Then, for each $p \in [1, \infty]$ there is $ C= C(p, \|K^\prime\|_{1},
 \|u_0\|_{1})>0$  independent of $t$ such that 
$
\|u(t)\|_p \le C t^{-(1-1/p)/2}
$
 for all $t > 0$.
\end{theorem}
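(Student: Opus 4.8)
The plan is to combine a propagation of the qualitative decay to all $L^p$-norms with an energy argument in which the hypothesis is used \emph{only} to guarantee that the nonlinear term eventually becomes subordinate to the diffusion. I would not try to close a Duhamel fixed-point estimate as in Theorem~\ref{sec2-th1}, because the linear part of the Duhamel formula always carries the conserved mass $\|u_0\|_1$, so the resulting quadratic inequality $N\le C\|u_0\|_1+C\|K'\|_1 N^2$ only closes for small $\|K'\|_1\|u_0\|_1$, whereas here the mass is arbitrary. Instead, the mechanism is that $\|u(t)\|_{p_0}\to0$ forces the destabilising coefficient in a differential inequality for $\|u(t)\|_p^p$ to vanish as $t\to\infty$, after which a nonlinear ordinary differential inequality of self-similar type produces the sharp algebraic rate with a constant that no longer remembers the initial data.

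First I would upgrade the one-exponent hypothesis to all exponents. Since Theorem~\ref{sec2-th0} gives $\sup_{t>t_0}\|u(t)\|_r<\infty$ for every $r\in(1,\infty)$ and conserves the mass $\|u(t)\|_1=M$, interpolation between $L^{p_0}$ (which tends to $0$) and either $L^1$ (for $1<p<p_0$) or a bounded higher norm (for $p>p_0$) shows that $\|u(t)\|_p\to0$ for every $p\in(1,\infty)$; the endpoint $p=\infty$ then follows from a short smoothing step once finite-$p$ decay is available. Note that $\|u(t)\|_1=M$ does not decay, which is consistent with the rate $t^{-(1-1/p)/2}$ degenerating to a constant at $p=1$.

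The core is the energy identity obtained by multiplying \eqref{intro-eq1} by $p\,u^{p-1}$ and integrating; for $t>0$ the solution is smooth enough (Theorem~\ref{sec2-th0}), and integration by parts gives
\[
\frac{d}{dt}\|u(t)\|_p^p = -\frac{4(p-1)}{p}\big\|\partial_x u^{p/2}\big\|_2^2 -(p-1)\int_\R u^p\,\partial_x(K'*u)\,dx.
\]
I would rewrite the last integral as $2(p-1)\int_\R u^{p/2}\,\partial_x(u^{p/2})\,(K'*u)\,dx$, bound it by Hölder together with Young's convolution inequality $\|K'*u\|_r\le\|K'\|_1\|u\|_r$ so that it carries a factor $\|u(t)\|_q$ with some finite $q\in(1,\infty)$, and then absorb the resulting $\|\partial_x u^{p/2}\|_2$ into the dissipation by Young's inequality. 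This yields
\[
\frac{d}{dt}\|u(t)\|_p^p \le -c\,\big\|\partial_x u^{p/2}\big\|_2^2 + \varepsilon(t)\,\|u(t)\|_p^p, \qquad \varepsilon(t)=C\|K'\|_1^{\kappa}\|u(t)\|_q^{\kappa}\xrightarrow[t\to\infty]{}0,
\]
where the decay of $\varepsilon(t)$ is exactly what the previous step provides and the coefficient $c$ depends only on $p$.

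Finally I would extract the sharp rate by a doubling induction on $p$. The base case $p=1$ is the exact identity $\|u(t)\|_1=M$. Assuming $\|u(t)\|_{p/2}\le C t^{-(1/2-1/p)}$, the one-dimensional Gagliardo--Nirenberg--Sobolev inequality \eqref{sec3-eq2} bounds the dissipation from below, $\big\|\partial_x u^{p/2}\big\|_2^2\ge c\,\|u\|_p^{3p}\,\|u\|_{p/2}^{-2p}\ge c\,t^{\,p-2}\,\|u\|_p^{3p}$, so with $y(t)=\|u(t)\|_p^p$ the inequality becomes $y'\le -c\,t^{\,p-2}y^3+\varepsilon(t)\,y$. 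For $t$ beyond the (solution-dependent) time at which $\varepsilon(t)$ is small, comparison with the self-similar barrier $y(t)=A\,t^{-(p-1)/2}$ --- which solves the equality $y'=-c\,t^{\,p-2}y^3$ with an amplitude $A$ fixed by $c$ alone --- forces $\|u(t)\|_p\le C\,t^{-(1-1/p)/2}$ with $C=C(p,\|K'\|_1,\|u_0\|_1)$; crucially the algebraic barrier forgets the initial value, which is why the final constant depends only on the data even though the hypothesis is merely qualitative. Doubling $p=1,2,4,\dots$ together with interpolation (which preserves the affine-in-$1/p$ exponent $(1-1/p)/2$) covers all $p\in[1,\infty)$, the endpoint $p=\infty$ follows from the $L^\infty$ control and GNS, and the estimate is extended to small $t$ by the standard $L^1$--$L^p$ smoothing bound available since $u_0\in L^1(\R)$. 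The main obstacle is this last ODE step: making the comparison rigorous in the presence of the perturbation $\varepsilon(t)y$ and the time-dependent coefficient $t^{\,p-2}$, so as to pin down simultaneously the sharp exponent and a data-only constant.
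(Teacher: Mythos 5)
Your overall architecture (energy inequality for $\|u\|_p^p$, a Nash/GNS lower bound on the dissipation, doubling induction, ODE comparison) is reasonable, and your preliminary step upgrading the hypothesis to $\|u(t)\|_q\to 0$ for all $q\in(1,\infty]$ matches the paper's first reduction. But there is a genuine gap at the decisive step: your estimate of the nonlinear term leaves an additive perturbation $\varepsilon(t)\,\|u(t)\|_p^p$ with only \emph{qualitative} smallness $\varepsilon(t)\to 0$, and the resulting differential inequality is too weak to produce the sharp rate. Concretely, for $y'\le -c\,t^{p-2}y^3+\varepsilon(t)\,y$ the right-hand side vanishes along the quasi-equilibrium $y_*(t)=c^{-1/2}\sqrt{\varepsilon(t)}\,t^{1-p/2}=c^{-1/2}\sqrt{\varepsilon(t)\,t}\;t^{-(p-1)/2}$, and $y_*$ is an approximate solution of the corresponding ODE (its derivative is of lower order than either term); so the inequality cannot rule out $y(t)\approx y_*(t)$, which decays strictly slower than $t^{-(p-1)/2}$ whenever $\varepsilon(t)t\to\infty$ (e.g.\ $\varepsilon(t)=t^{-1/2}$ gives a loss of $t^{1/4}$). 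The barrier comparison therefore closes only if $\varepsilon(t)=O(1/t)$, i.e.\ only if $\|u(t)\|_\infty=O(t^{-1/2})$ --- which is precisely what you are trying to prove; making this circular bootstrap rigorous would require a genuinely new ingredient (an iterative improvement of rates) that your proof does not contain. A secondary flaw: since $K'$ is only in $L^1(\R)$, Young's inequality forces the extracted factor to be $\|u\|_\infty$ rather than $\|u\|_q$ with finite $q$ (otherwise the remaining power is $\|u\|_{pr'}^p$, not $\|u\|_p^p$); that alone is repairable, but the ODE step is not.

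The paper closes exactly this gap by estimating the nonlinear term so that it is absorbed \emph{entirely} into the dissipation, with no zeroth-order remainder. Working only with $p=2$, it splits $K'=\widetilde K'+(K'-\widetilde K')$ where $\widetilde K'\in C_c^\infty(\R)$ and $\|K'-\widetilde K'\|_1\le\varepsilon$ is fixed small: the rough part is bounded by $C\varepsilon\|u_0\|_1\|u_x\|_2^2$ via GNS, while for the smooth part the cancellation $\int_\R u^2u_x\,dx=0$ allows one to replace $\widetilde K'*u$ by $\widetilde K'*u-Au$ with $A=\int_\R\widetilde K'$, and Lemma \ref{sec3-new-lem3} then \emph{gains a derivative}, $\|\widetilde K'*u-Au\|_\infty\le C_{\widetilde K}\|u_x\|_2$, so this part is bounded by $C_{\widetilde K}\,\|u(t)\|_2\,\|u_x\|_2^2$. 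The qualitative hypothesis enters here and only here: once $\|u(t)\|_2$ is small, the whole nonlinearity is dominated by a fraction of $\|u_x\|_2^2$, giving the clean inequality $\frac{d}{dt}\|u(t)\|_2^2\le -C\|u_x(t)\|_2^2$ for $t\ge T_0$, hence the sharp $L^2$ rate by the Nash inequality. The decay of all other $L^p$-norms then follows from the Duhamel estimate of Theorem \ref{sec2-th1}, which --- contrary to your opening remark --- is not used there as a fixed-point argument and requires no smallness once the $L^2$ decay is in hand.
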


The main goal of this work is to derive  an asymptotic profile as $t \to \infty$ of those solutions from Theorem \ref{sec2-th0}
which satisfy  the $L^p$-decay estimates \eqref{sec2-eq4}.

\begin{theorem}[Self-similar asymptotics]\label{sec2-th2}
Under the assumptions of Theorem \ref{sec2-th0}, every solution 
$u = u(x, t)$ of problem \eqref{intro-eq1}--\eqref{intro-eq2} 
satisfying  estimate \eqref{sec2-eq4} 
has a self-similar asymptotic profile as $t\to\infty$. More precisely, 
\begin{itemize}
\item[i.] if
$ \int_{\R} K^\prime (x)\, dx = 0$,
  we have 
\begin{align}
t^{(1-1/p)/2} \|u(t) - M \G(t)\|_p \to 0 \quad \text{as}\ t \to
 \infty \label{sec2-eq6}
\end{align}
for every $p \in [1, \infty]$, where $M = \int_{\R} u_0(x)\, dx$ and $\G(x,
 t) = \frac{1}{\sqrt{4 \pi t}} \exp\big(-\frac{|x|^2}{4t}\big)$ is the heat kernel. %

\item[ii.] On the other hand, if
$
A \equiv \int_{\R} K^\prime (x)\, dx \neq 0$, 
we have 
\begin{align}
t^{(1-1/p)/2} \|u(t) - \mathcal{U}_{M, A}(t)\|_p \to 0 \quad \text{as}\ t \to
 \infty \label{sec2-eq8}
\end{align}
for every $p \in [1, \infty]$, where $\mathcal{U}_{M, A}(x, t)=\frac{1}{\sqrt{t}}
\mathcal{U}_{M,A}\big(\frac{x}{\sqrt{t}},1\big)$ 
is the so-called nonlinear diffusion wave and is defined as  the unique self-similar
 solution of the initial value problem for the viscous Burgers equation
\begin{align}
&U_t = U_{xx} - A \left(U^2 \right)_x ,\quad
 \text{for}\ x \in \R,\ t > 0, \label{sec2-eq9}\\
&U(x, 0) = M \delta_0, \label{sec2-eq10}
\end{align}
where $\delta_0$ is the Dirac measure.
\end{itemize}
\end{theorem}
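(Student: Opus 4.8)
The plan is to establish both asymptotic profiles simultaneously by means of a parabolic rescaling combined with a compactness argument; the dilation will automatically decide whether the nonlocal term survives in the limit. For $\lambda>0$ set $u_\lambda(x,t)=\lambda\,u(\lambda x,\lambda^2 t)$. A direct computation shows that $u_\lambda$ solves equation \eqref{intro-eq1} with $K'$ replaced by the dilated kernel $K'_\lambda(y)=\lambda K'(\lambda y)$; this kernel satisfies $\|K'_\lambda\|_1=\|K'\|_1$ and $\int_\R K'_\lambda\,dy=A$, and $K'_\lambda\to A\delta_0$ as $\lambda\to\infty$ in the sense of an approximate identity. Since both $\G$ and $\U_{M,A}$ are self-similar, hence fixed points of the map $V\mapsto V_\lambda$, a change of variables turns the two target estimates \eqref{sec2-eq6} and \eqref{sec2-eq8} into the single statement
\begin{equation*}
\|u_\lambda(\cdot,1)-V\|_p\longrightarrow 0\qquad\text{as}\quad \lambda\to\infty,\quad\lambda=\sqrt t,
\end{equation*}
where $V=M\G(\cdot,1)$ when $A=0$ and $V=\U_{M,A}(\cdot,1)$ when $A\neq0$.

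Next I would collect uniform-in-$\lambda$ estimates. The decay bound \eqref{sec2-eq4}, assumed for $u$, is scaling invariant, so $\|u_\lambda(t)\|_p\le C\,t^{-(1-1/p)/2}$ for all $p\in[1,\infty]$ and $\lambda\ge1$, while mass conservation \eqref{mass} gives $\|u_\lambda(t)\|_1=M$. Using these bounds in the Duhamel representation of $u_\lambda$ (heat semigroup plus its smoothing estimates) I would obtain, uniformly in $\lambda\ge1$ and locally uniformly in $t>0$, control of spatial derivatives of $u_\lambda$ and equicontinuity in $t$, together with a uniform tail (tightness) estimate ruling out escape of mass to infinity. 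An Arzel\`a--Ascoli / Aubin--Lions argument then yields relative compactness of $\{u_\lambda\}$ in $C([\tau_1,\tau_2],L^p(\R))$ for every $0<\tau_1<\tau_2<\infty$ and $p\in[1,\infty)$, so that along a subsequence $u_{\lambda_n}\to u_\infty$.

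The heart of the proof is the passage to the limit in the nonlinear term. I would show that $K'_{\lambda_n}*u_{\lambda_n}\to A\,u_\infty$ by writing $K'_{\lambda_n}*u_{\lambda_n}-A u_\infty=K'_{\lambda_n}*(u_{\lambda_n}-u_\infty)+(K'_{\lambda_n}*u_\infty-A u_\infty)$: the first term is bounded by $\|K'\|_1\,\|u_{\lambda_n}-u_\infty\|_p\to0$, and the second vanishes because $K'_{\lambda_n}$ acts as an approximate identity (times $A$) and $u_\infty\in L^p(\R)$. Hence $u_{\lambda_n}(K'_{\lambda_n}*u_{\lambda_n})\to A\,u_\infty^2$, and passing to the limit in the weak formulation shows that $u_\infty$ is a nonnegative solution of the viscous Burgers equation $\partial_t u_\infty=\partial_{xx}u_\infty-A(u_\infty^2)_x$---which degenerates to the linear heat equation when $A=0$. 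Since $u_{\lambda_n}(\cdot,0)=\lambda_n u_0(\lambda_n\,\cdot)\rightharpoonup M\delta_0$ and the mass $M$ is preserved in the limit by tightness, the initial datum of $u_\infty$ is $M\delta_0$. By uniqueness of the self-similar solution issued from a Dirac mass---immediate for the heat equation and given by Escobedo--Zuazua for Burgers---one concludes $u_\infty=V$, independently of the subsequence, so the whole family converges.

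It then remains to record that this convergence holds in every $L^p(\R)$, including the endpoints. For $p\in[1,\infty)$ the tightness and uniform integrability built into the compactness step already give convergence in $L^p(\R)$, while the case $p=\infty$ follows from the uniform interior H\"older bounds (local uniform convergence) combined with the uniform decay of the tails of both $u_\lambda(\cdot,1)$ and $V$. The hard part of the whole scheme is precisely securing the uniform parabolic regularity and the tightness at spatial infinity: the nonlocal term is the chief technical obstruction, and it is the scaling relation $\|K'_\lambda\|_1=\|K'\|_1$ together with $K'_\lambda\to A\delta_0$ that both makes the limit passage tractable and produces, automatically, the dichotomy between the linear diffusion wave of case~i and the nonlinear diffusion wave of case~ii.
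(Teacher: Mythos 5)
Your proposal is correct and follows essentially the same route as the paper: the parabolic rescaling $u_\lambda(x,t)=\lambda u(\lambda x,\lambda^2 t)$ with the dilated kernel $K'_\lambda$, scaling-invariant decay and mass bounds, Duhamel-based gradient estimates feeding an Aubin--Simon compactness argument with a tightness (tail) estimate, the same splitting $K'_{\lambda}*u_{\lambda}-Au_\infty=K'_{\lambda}*(u_{\lambda}-u_\infty)+(K'_{\lambda}*u_\infty-Au_\infty)$ to pass to the limit in the nonlocal term, identification of the limit as the unique source solution of the viscous Burgers (or heat) equation via Escobedo--V\'azquez--Zuazua, and finally interpolation to reach all $p\in[1,\infty]$. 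The only cosmetic difference is that the paper proves compactness only in $C([t_1,t_2],L^1(\R))$ and upgrades to $L^p$ and $L^\infty$ afterwards by H\"older and Gagliardo--Nirenberg with the gradient decay, rather than asserting compactness in every $L^p$ directly.
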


Let us recall  properties of
solutions to \eqref{sec2-eq9}-\eqref{sec2-eq10}
which will be useful  in the proof of Theorem \ref{sec2-th2}.
It is well-known
 that the Hopf-Cole
transformation  allows us to solve this initial value problem to obtain
the following explicit solution
\begin{equation}\label{nonlin:diff}
\mathcal{U}_{M,A}(x,t)=\frac{At^{-1/2} \exp{(-|x|^2/(4t))}}{C_{M,A} +
 \frac{1}{2}\int_0^{x/\sqrt{t}} \exp{(-\xi^2/4)}\, d\xi},
\end{equation}
where $C_{M,A}$ is a constant which is determined uniquely as a function of
 $M$ and $A$ by the
condition  $\int_\R \U_{M,A}(\eta,1) \;d\eta=M$.
The important point to note here is that
for every $M\in \R$ the function $\U_{M,A}$ is a unique  solution
to equation \eqref{sec2-eq9} in the space  $C((0,\infty);L^1(\R))$ 
having the properties
\begin{equation*}
\int_\R \U_{M,A}(x,t)\;dx =M \quad \mbox{for all}\quad t>0\label{ini1}
\end{equation*}
and
\begin{equation*}
\int_\R \U_{M,A}(x,t) \varphi(x)\;dx \to M\varphi(0) \quad \mbox{as}
\quad  t\to 0\label{ini2}
\end{equation*}
for all $\varphi\in C^\infty_c(\R)$ (\cite[Sec.~4]{EVZ93}).
Such  a solution is called a fundamental solution in the linear theory
and a {\it source solution} in the nonlinear case (cf. \cite{CL, EVZ93, LP}).

In the proof of Theorem \ref{sec2-th2}, we study the behavior, as $\lambda\to\infty$, of 
 the rescaled family of functions
\begin{equation}\label{ul:KL}
 u_\lambda (x, t) = \lambda u(\lambda x, \lambda^2 t) \quad
 \text{and}\quad K^\prime_\lambda (x) = \lambda K^\prime (\lambda x), \quad \mbox{for every}\quad \lambda>0,
\end{equation}
which satisfy the initial value problems
\begin{align}
\partial_t u_\lambda &= \partial_x^2 u_\lambda - \partial_x
 \left(u_\lambda \left(K^\prime_\lambda \ast u_\lambda \right)\right),
 \label{eq:lambda}\\
u_\lambda (x, 0) &= u_{0, \lambda}(x) = \lambda u_0 (\lambda x). \label{ini:lambda}
\end{align}
Notice that 
if $u=u(x,t)$ is a nonnegative solution of problem \eqref{intro-eq1}--\eqref{intro-eq2} obtained in Theorem \ref{sec2-th0},  then 
by \eqref{mass} and by a simple change of variables,
the following identities
\begin{equation}\label{ul:Kl:L1}
\|u_\lambda (t)\|_1 = \|u_0 \|_1\quad \mbox{and}\quad 
 \|K_\lambda^\prime \|_1 = \|K^\prime \|_1
 \end{equation}
hold true for all $t>0$ and all $\lambda>0$.

\medskip
Let us now emphasize that we obtain an asymptotic profile of all solutions to problem 
\eqref{intro-eq1}--\eqref{intro-eq2} which tend to zero as $t\to\infty$.
Additional assumptions on the kernel $K'$ and on  initial conditions
(as those in Theorem~\ref{sec2-th1})  seem to  be  necessary to prove the decay  of solutions
because, in our next theorem, we show a 
concentration phenomenon for some solutions to 
 \eqref{intro-eq1}--\eqref{intro-eq2}.  Here, in order to understand our result, 
one should keep in mind that if a solution $u=u(x,t)$  
behaves for large $t$ either as the heat kernel or the nonlinear diffusion wave, one should expect
that $\|u(t)\|_\infty$ decreases and the first moment $\int_\R u(x,t)|x| \,dx$ increases in $t$.
Here, we find a large class of kernels and initial conditions such that corresponding solutions
have different behavior, at least,  on a certain initial time interval.

\begin{theorem}[Concentration phenomenon]\label{thm:spike}
Assume that the kernel $K'$ satisfies 
\begin{itemize}
\item $K'(x) = K'(|x|)\,{\rm sgn}\,x \;$   for all  $x \in \R\setminus\{0\}$,
\item  $K^\prime (x) \le 0$   for all $x > 0$, 
\item  there exists $\delta > 0$  and
 $\gamma > 0$  such that  $\sup_{0 \le x \le \delta} K^\prime (x)
 \le -\gamma$.
\end{itemize}
Let $u_0\in C_c^\infty (\R)$ be nonnegative, nontrivial, and even.
For every $P>0$, set $u_{0,P}(x)=P^3 u_0(Px)$ and denote by $u_P=u_P(x,t)$ the corresponding solution
of problem \eqref{intro-eq1}--\eqref{intro-eq2} with the kernel $K'$ and $u_{0,P}$ as the initial datum.
If $P>0$ is sufficiently large, then the first moment $I_P(t)=\int_\R u_P(x,t)|x|\,dx $ is a strictly decreasing function of $t\in [0,T]$ for some
$T=T(P)>0$.
\end{theorem}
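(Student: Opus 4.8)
The plan is to differentiate the first moment $I_P(t)=\int_\R u_P(x,t)\,|x|\,dx$ in time and to show that, once $P$ is large, a strictly negative nonlocal contribution dominates the positive diffusive one. First I would record that $u_P(\cdot,t)$ is even for every $t\ge 0$: since $u_{0,P}$ is even and $K'$ is odd, the reflected function $\tilde u(x,t)=u_P(-x,t)$ satisfies $(K'\ast\tilde u)(x)=-(K'\ast u_P)(-x)$, and hence solves the same problem \eqref{intro-eq1}--\eqref{intro-eq2} with the same initial datum; the uniqueness in Theorem~\ref{sec2-th0} then forces $\tilde u=u_P$. Next, using \eqref{intro-eq1} and integrating by parts twice, and recalling that $\partial_x|x|=\mathrm{sgn}\,x$ and $\partial_x^2|x|=2\delta_0$ in the distributional sense, I would arrive at
\[
\frac{d}{dt}I_P(t)=2u_P(0,t)+\int_\R u_P(x,t)\,(K'\ast u_P)(x,t)\,\mathrm{sgn}\,x\,dx ,
\]
the boundary terms vanishing because $u_P(\cdot,t)$ and its first derivative decay at infinity.

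The heart of the argument is to rewrite the last integral. Since $u_P$ is even and $K'$ odd, $v=K'\ast u_P$ is odd, so $u_P\,v\,\mathrm{sgn}\,x$ is even and
\begin{align*}
\int_\R u_P\,v\,\mathrm{sgn}\,x\,dx
&=2\int_0^\infty\!\!\int_\R K'(x-y)u_P(x)u_P(y)\,dy\,dx\\
&=2\int_0^\infty\!\!\int_0^\infty\bigl[K'(x-y)+K'(x+y)\bigr]u_P(x)u_P(y)\,dy\,dx,
\end{align*}
where the second equality uses the reflection $y\mapsto-y$ together with the evenness of $u_P$. The term carrying $K'(x-y)$ is antisymmetric under $x\leftrightarrow y$ on the symmetric domain $(0,\infty)^2$ and therefore integrates to zero. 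Hence
\[
\frac{d}{dt}I_P(t)=2u_P(0,t)+2\int_0^\infty\!\!\int_0^\infty K'(x+y)\,u_P(x,t)u_P(y,t)\,dy\,dx .
\]
Because $x+y>0$ on the quadrant, the assumption $K'\le0$ on $(0,\infty)$ makes this double integral nonpositive; restricting it further to $(0,\delta/2)^2$, where $0<x+y<\delta$ and thus $K'(x+y)\le-\gamma$, gives
\[
\frac{d}{dt}I_P(t)\le 2u_P(0,t)-2\gamma\Bigl(\int_0^{\delta/2}u_P(x,t)\,dx\Bigr)^2 .
\]

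It then remains to compare the two terms by scaling. At $t=0$ one has $u_{0,P}(0)=P^3u_0(0)$, so the first term is of order $P^3$, whereas the substitution $z=Px$ gives $\int_0^{\delta/2}u_{0,P}\,dx=P^2\int_0^{P\delta/2}u_0(z)\,dz\to P^2\tfrac12\|u_0\|_1$, so the squared second term is of order $P^4$. Consequently
\[
\frac{d}{dt}I_P(0)\le 2P^3u_0(0)-2\gamma\bigl(P^2\textstyle\int_0^{P\delta/2}u_0\bigr)^2<0
\]
for all $P$ large enough. Finally I would upgrade this to an interval $[0,T]$ by continuity in time: $\int_0^{\delta/2}u_P(x,t)\,dx$ is continuous on $[0,\infty)$ by the $L^1$-continuity in Theorem~\ref{sec2-th0}, and $u_P(0,t)\le\|u_P(t)\|_\infty$ stays of order $P^3$ for small $t$, so the displayed upper bound remains strictly negative on $[0,T]$ for some $T=T(P)>0$, yielding the claimed strict monotonicity of $I_P$.

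The main obstacle is analytic rather than algebraic: one must justify differentiating $I_P$ under the integral sign and the two integrations by parts, which requires that $u_P(\cdot,t)$ possess a finite, time-continuous first moment and sufficiently fast spatial decay — this is where the compact support and smoothness of $u_{0,P}$, together with the regularizing estimates of Theorem~\ref{sec2-th0}, enter. A second delicate point is bounding $\|u_P(t)\|_\infty$ for small $t>0$ so that $u_P(0,t)$ remains of order $P^3$; since Theorem~\ref{sec2-th0} only guarantees $L^1\cap L^q$-continuity up to $t=0$, I would derive this from the Duhamel representation, estimating the drift correction to the heat flow of $u_{0,P}$ over a short time interval.
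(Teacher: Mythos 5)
Your proof is correct, and although its skeleton matches the paper's (evenness of $u_P$ via uniqueness, the formula $\frac{d}{dt}I_P(t)=2u_P(0,t)+{}$nonlocal term, a $P^3$-versus-$P^4$ scaling comparison at $t=0$, and a continuity argument to get an interval $[0,T]$), your treatment of the nonlocal term takes a genuinely different route. The paper symmetrizes the double integral over $\R^2$, uses the identity $\left(\frac{x}{|x|}-\frac{y}{|y|}\right)\frac{x-y}{|x-y|}=\frac{|x|+|y|}{|x-y|}\left(1-\frac{x}{|x|}\cdot\frac{y}{|y|}\right)$ together with $K'\le 0$ to restrict to $\{|x|<\delta/2,\,|y|<\delta/2\}$, and then re-extends the integral to all of $\R^2$, producing the main term $-\frac{\gamma}{2}M_P^2$ plus an error controlled by the Chebyshev-type bound $\int_{|y|>\delta/2}u_P\,dy\le\frac{2}{\delta}I_P(t)$; this ends in the closed differential inequality $\frac{d}{dt}I_P(t)\le 2u_P(0,t)-\frac{\gamma}{2}M_P^2+\frac{2\gamma}{\delta}M_P I_P(t)$. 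You instead fold the integral onto the quadrant $(0,\infty)^2$ using parity, split $K'(x-y)+K'(x+y)$, and kill the $K'(x-y)$ part exactly by antisymmetry under $x\leftrightarrow y$ (legitimate, since $\iint|K'(x-y)|u_P(x)u_P(y)\,dx\,dy\le\|K'\|_1\|u_P\|_1\|u_P\|_\infty<\infty$ justifies Fubini), arriving at the exact identity $\frac{d}{dt}I_P(t)=2u_P(0,t)+2\iint_{(0,\infty)^2}K'(x+y)u_P(x,t)u_P(y,t)\,dx\,dy$ and hence the bound with $-2\gamma\bigl(\int_0^{\delta/2}u_P(x,t)\,dx\bigr)^2$. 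What your version buys: no Chebyshev step, no $I_P(t)$ on the right-hand side, and the time-continuity needed to pass from $t=0$ to $[0,T]$ reduces to that of $\int_0^{\delta/2}u_P(\cdot,t)\,dx$, which is immediate from $u_P\in C([0,\infty),L^1(\R))$ in Theorem \ref{sec2-th0}. What it costs: the exact cancellation leans heavily on one space dimension and on the evenness of $u_P$, whereas the paper's symmetrization (borrowed from the blowup arguments of \cite{BKL09,KS09}) is the form of the argument that survives in higher dimensions. Finally, the two analytic caveats you flag--justifying the differentiation of the moment and the integrations by parts, and controlling $u_P(0,t)$ down to $t=0$ so that the positive term stays of order $P^3$ on a short time interval--are equally present in the paper's proof, where they are dispatched with the phrase ``by the continuity of the functions $u_P$ and $I_P$''; your proposed small-time Duhamel estimate $\|u_P(t)\|_\infty\le\|u_{0,P}\|_\infty+C\|K'\|_1\int_0^t(t-s)^{-1/2}\|u_P(s)\|_\infty^2\,ds$ is a sound way to make that point rigorous, so if anything you are more careful here than the original.
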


Some remarks on Theorem \ref{thm:spike} are in order.

\begin{rem}
By the uniqueness, the solution $u_P(x,t)$ considered in Theorem \ref{thm:spike} is an even function of $x$ for every $t>0$.
Hence, it is expected that, for suitable initial conditions, we have  $u_P(0,t)=\max_{x\in\R} u_P(x,t)$. It follows from the proof of 
Theorem \ref{thm:spike} that the quantity 
$u_P(0,t)$ has to increase on an interval $[0,T]$, however, we do not know if it increases monotonically. This phenomenon 
is in perfect agreement with numerical simulations of spikes in the one-dimensional Keller-Segel model, which are reported  in
 \cite{HP04}.
\end{rem}

\begin{rem}
Assumptions on the kernel $K$, as those stated in Theorem \ref{thm:spike}, were imposed in \cite{BKL09,KS09} to show the
finite-time blowup of solutions to aggregation equations either in  the dimension $n\geq 2$ or with an anomalous diffusion modeled by 
the fractional Laplacian.
In this work, however, solutions are global-in-time by Theorem \ref{sec2-th0}, but they have a tendency to form spiky-like structures as those
discussed {\it e.g.} in \cite{H07}.
\end{rem}

\begin{rem}
Notice that $M_P\equiv \int_\R u_{0,P}(x)\,dx =P^2 \int_\R u_{0}(x)\,dx$. Hence, the concentration phenomenon described 
in Theorem \ref{thm:spike} appears only if mass of a solution is sufficiently large. This result should be compared with 
Theorem \ref{sec2-th1}, where solutions are shown to decay for sufficiently small masses.
Moreover, in that case, by the inspection of the proof of Theorem \ref{sec2-th1}, 
one can show that the $L^2$-norm of solutions decays monotonically for all $t>0$. 
\end{rem}

\begin{rem}
It is shown in Theorem \ref{thm:spike} that the first moment $I_P(t)$ is strictly decreasing on a certain initial time interval, however, it cannot
converge to zero when $t\to\infty$. 
Indeed, such a decay of $I_P(t)$ cannot be true because of the inequality (see {\it e.g.} \cite[Remark  2.6]{BK10})
$$
\left(\int_\R u_P(x,t)\,dx\right)^{2-1/p}
\leq C \|u_P(t)\|_p \; I_P(t)^{1-1/p},
$$
the conservation of  mass \eqref{mass}, and the boundedness of the $L^p$-norm of solutions to \eqref{intro-eq1}-\eqref{intro-eq2},
shown in Proposition \ref{sec-ex-prop1}, below.
\end{rem}

\begin{rem}
It is not clear for us what is the large time behavior of solutions to problem \eqref{intro-eq1}-\eqref{intro-eq2} which concentrate initially
in the sense described in Theorem \ref{thm:spike}. Our numerical simulations of solutions to equation \eqref{intro-eq1} on a finite
interval and with the Neumann boundary conditions show their convergence towards nonconstant stationary solutions 
(these results will be published in our subsequent paper).   The large time behavior of solutions to problem  \eqref{intro-eq1}-\eqref{intro-eq2} 
on the whole line $x\in\R$ seems to be more complicated, because one can easily shown that equation \eqref{intro-eq1} 
has no non-zero stationary solutions which decay at infinity sufficiently fast. Indeed,
the equation $w_{xx}-(wK'*w)_x=0$ implies  $w_x-wK'*w =C$, and the constant $C$ has to vanish.
Hence, for $v(x)=\int_{-\infty}^x K'*w(y)\,dy$, we have $(we^{-v})_x=0$ and, consequently, 
$
w(x)=Ce^{v(x)}. 
$
It is easy to check that $w\in L^1(\R)$ and $K'\in L^1(\R)$ 
implies $v\in L^\infty(\R)$, hence, relation $w=Ce^v$  can be true for $C=0$ and $w\equiv 0$, only.
\end{rem}

\begin{rem}
In \cite{NSU03,NY07}, the authors study the large time behavior of solutions to the Cauchy problem for the
so-called parabolic-parabolic model of chemotaxis
\begin{equation}\label{pp-chemo}
u_t=\Delta u -\nabla\cdot (u\nabla v), \quad v_t=\Delta v -v+u, \qquad x\in \R^n, \; t>0.
\end{equation}
Their results can be summarized as follows. If the solution $(u(x, t),
 v(x, t))$ of \eqref{pp-chemo} %
satisfies  $\sup_{t>0} \left(\|u(t)\|_p+\|v(t)\|_p \right) < \infty$ for every $p\in [1,\infty]$,
then $u(x,t)$ decays as a solution to the linear heat equation and
its large time behavior is described by the heat kernel. Moreover, a higher order term of the asymptotic expansion of $u$ is calculated.
Here, however, because of a technical obstacle, we cannot apply methods from \cite{NSU03,NY07} to show a decay of solutions
to \eqref{intro-eq1}-\eqref{intro-eq2}.
\end{rem}

\section{Existence of solutions -- proof of Theorem \ref{sec2-th0}}

The proof of the existence of local-in-time solutions to \eqref{intro-eq1}--\eqref{intro-eq2} is  standard, hence, we
only sketch that reasoning.


{\it Step 1. Local-in-time solutions.}
We  construct  local-in-time {\it mild} solutions of \eqref{intro-eq1}--\eqref{intro-eq2} which are solutions of  
the following integral equation
\begin{align}
u(t) = \G(\cdot , t)\ast u_0 - \int_0^t \partial_x \G(\cdot , t-s)* \big(u(K' \ast
 u)\big)(s)\, ds \label{duhamel}
\end{align}
with the heat kernel 
$ \G(x, t) = (4\pi t)^{-1/2}\exp\big(-{|x|^2}/(4t)\big)$. 
In our reasoning, we use the following  estimates which 
result immediately from the
Young inequality for the convolution:
\begin{align}
\norm{{\G(\cdot , t)\ast f}}{p} \le C t^{- \frac{1}{2}\left( \frac{1}{q}-\frac{1}{p} \right)}\norm{f}{q}, \label{G1}\\
\norm{{\partial_x \G(\cdot ,t)\ast f}}{p} \le C t^{- \frac{1}{2}
 \left(\frac{1}{q}-\frac{1}{p} \right)- \frac{1}{2}}\norm{f}{q} \label{G2}
\end{align}
for every $1 \le q \le p \le \infty$, each $f \in L^q (\R)$, and   $C=C(p,q)$ independent of $t,f$.
Notice that $C=1$ in inequality \eqref {G1} for $p=q$ because $\|\G(\cdot,t)\|_{L^1}=1$ for all $t>0$.


\begin{lemma}[Local existence]\label{local-ex}
Assume that 
$K^\prime \in L^1 (\R)$  and 
$u_0 \in L^1 (\R)
 \cap L^q (\R)$ 
 for some $q \in [2,  \infty)$. %
Then there exists $T=T(\|u_0\|_1, \|u_0\|_q, \|K'\|_1) > 0$ such that the integral equation \eqref{duhamel}
 has the unique solution in the
 space $\mathcal{Y}_T = C([0, T], L^1 (\R)) \cap C([0,
T], L^q (\R))$. 
Moreover, this solution satisfies 
$
 u \in C((0, T], L^p (\R))
 $
for all $1\leq  p \le \infty$.

\end{lemma}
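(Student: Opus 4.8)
The plan is to realize solutions of the integral equation \eqref{duhamel} as fixed points of the nonlinear map
\[
\Phi(u)(t) = \G(\cdot,t)\ast u_0 - \int_0^t \partial_x\G(\cdot,t-s)\ast\big(u(K'\ast u)\big)(s)\,ds,
\]
and to apply the Banach contraction principle on a closed ball of $\mathcal{Y}_T = C([0,T],L^1(\R))\cap C([0,T],L^q(\R))$. First I would fix $R > 2(\|u_0\|_1+\|u_0\|_q)$ and work in $B_R=\{u\in\mathcal{Y}_T:\ \sup_{0\le t\le T}(\|u(t)\|_1+\|u(t)\|_q)\le R\}$, which is a complete metric space under the $\mathcal{Y}_T$-norm. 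The linear part $\G(\cdot,t)\ast u_0$ lies in $B_{R/2}$ for every $T$ by \eqref{G1} with $p=q$ (and $p=1$), so all the work lies in estimating the Duhamel integral.

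The self-mapping property $\Phi(B_R)\subseteq B_R$ rests on bounding $\|u(K'\ast u)\|_r$ for a well-chosen $r$. Since $\|K'\ast u\|_s\le\|K'\|_1\|u\|_s$ by the Young inequality, the kernel supplies no smoothing and only the factor $\|K'\|_1$. For the $L^1$-component I would estimate, using H\"older and interpolation of $L^2$ between $L^1$ and $L^q$,
\[
\|u(K'\ast u)\|_1\le\|u\|_2\,\|K'\ast u\|_2\le\|K'\|_1\|u\|_2^2\le C\|K'\|_1R^2,
\]
and apply \eqref{G2} with $p=q=1$; the resulting integral $\int_0^t(t-s)^{-1/2}\,ds=2t^{1/2}$ carries a positive power of $t$. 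For the $L^q$-component I would take the intermediate exponent $q_0=q/2$, which is admissible precisely because $q\ge 2$ by \eqref{sec2-eq2}; then $\|u(K'\ast u)\|_{q/2}\le\|u\|_q\|K'\ast u\|_q\le\|K'\|_1\|u\|_q^2\le C\|K'\|_1R^2$, and \eqref{G2} with $p=q$ and this $q_0$ produces the singularity $(t-s)^{-\frac12(1+1/q)}$, integrable since $q>1$, with $\int_0^t(t-s)^{-\frac12(1+1/q)}\,ds=C\,t^{\frac12(1-1/q)}$. Hence both components of $\|\Phi(u)(t)\|$ are bounded by $R/2+C(\|K'\|_1,R)\,t^{\eta}$ with $\eta>0$, and a small $T$ forces $\Phi$ into $B_R$.

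The contraction estimate follows the same scheme applied to the bilinear identity $u(K'\ast u)-w(K'\ast w)=u(K'\ast(u-w))+(u-w)(K'\ast w)$, giving $\sup_t\|\Phi(u)(t)-\Phi(w)(t)\|\le C(\|K'\|_1,R)\,T^{\eta}\sup_t\|u(t)-w(t)\|$; shrinking $T$ further yields a contraction factor below $1$, so Banach's theorem produces a unique fixed point in $B_R$. Uniqueness in all of $\mathcal{Y}_T$ follows from a Gronwall-type argument with the same bilinear bound, propagated over $[0,T]$. The extra regularity $u\in C((0,T],L^p(\R))$ for $1\le p\le\infty$ is then obtained by bootstrapping from the $L^1\cap L^q$ bounds: the term $\G(\cdot,t)\ast u_0$ lies in $L^p$ for $t>0$ by \eqref{G1}, applying \eqref{G2} to the Duhamel integral with target exponent $p$ and source controlled in $L^1\cap L^q$ shows finiteness in $L^p$, and time continuity on $(0,T]$ comes from strong continuity of the heat semigroup together with dominated convergence.

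The main obstacle is that $K'\in L^1(\R)$ supplies no regularization whatsoever, so every gain of integrability and all control of the nonlinearity must come from the derivative of the heat kernel through \eqref{G2}. The delicate point is to choose the intermediate exponent so that the time singularity $(t-s)^{-\frac12(1/q_0-1/q)-\frac12}$ near $s=t$ stays integrable while $u(K'\ast u)$ remains bounded by the $\mathcal{Y}_T$-norm; the hypothesis $q\ge 2$ is exactly what makes the choice $q_0=q/2\ge 1$ admissible and renders these two requirements compatible.
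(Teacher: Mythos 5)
Your proposal is correct and takes essentially the same approach as the paper: a fixed-point/contraction argument in $\mathcal{Y}_T$ based on the Duhamel formula \eqref{duhamel} and the heat-kernel estimates \eqref{G1}--\eqref{G2}, with H\"older, Young, and interpolation (exploiting $q\ge 2$) to control the nonlinearity; the paper merely packages the iteration as the abstract bilinear lemma of \cite[Lemma 3.1]{KS09} applied to the form \eqref{sec-ex-eq1}. Your choices of intermediate exponents ($L^2$ for the $L^1$-component, source $L^{q/2}$ for the $L^q$-component) differ only cosmetically from the paper's, which measures $u(K'\ast v)$ in $L^1$ via the $(q,q')$ pairing for both components.
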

\begin{proof}
Here, it suffices to follow the reasoning from \cite[Theorem 2.3]{KS09}, where 
local-in-time existence of solutions to the equation \eqref{duhamel}, written as 
$u(t) =
\G(\cdot, t) \ast u_0 + B(u, u)(t)$
with the bilinear form
\begin{align}
B(u, v) (t) = -\int_0^t \partial_x \G(\cdot, t-s)\ast (u(K^\prime \ast
 v))(s)\, ds, \label{sec-ex-eq1}
\end{align}
are constructed in the space $\mathcal{Y}_T$ 
supplemented with the norm $\|u\|_{\mathcal{Y}_T} = \sup_{0
\le t \le T}\|u\|_1 + \sup_{0 \le t \le T}\|u\|_q.$ 
Notice that  $\G(\cdot, t) \ast u_0 \in \mathcal{Y}_T$ by \eqref{G1}.  To apply ideas from 
\cite[Theorem 2.3]{KS09},  one should prove the following estimates of the bilinear form \eqref{sec-ex-eq1}.

First, for every $u, v \in \mathcal{Y}_T$, using \eqref{G1}-\eqref{G2}
 and the Young and the H\"older inequalities, we obtain 
\begin{align*}
\|B(u, v) (t)\|_1 %
&\le C\int_0^t (t-s)^{-1/2} \|u(K^\prime \ast v)(s)\|_1 \, ds \\
&\le C\int_0^t (t-s)^{-1/2} \|u (s)\|_q \|v
 (s)\|_{q^\prime} \|K^\prime \|_1\, ds,
\end{align*}
where  $1/q + 1/q^\prime = 1$. %
Since $1 < q^\prime \le q$ for $q \ge 2$, by a standard interpolation, we have
$
 \|v (s)\|_{q^\prime} \le C (\|v (s)\|_1 + \|v (s)\|_q).
$
Therefore, using the definitions of the norm in $\mathcal{Y}_T $ we obtain 
$
\|B(u, v) (t)\|_1  \le C T^{1/2} \|K^\prime \|_1 \|u\|_{\mathcal{Y}_T}
\|v\|_{\mathcal{Y}_T}.
$

In a similar way, we prove the following $L^q$-estimate
\begin{align}
\|B(u, v) (t)\|_q &\le C\int_0^t (t-s)^{-(1-1/q)/2 - 1/2} \|u(K^\prime
 \ast v)(s)\|_1 \, ds \label{ex-101}\\
&\le C T^{1/2q} \|K^\prime \|_1 \|u\|_{\mathcal{Y}_T} \|v\|_{\mathcal{Y}_T}.\nonumber
\end{align}
Summing up these inequalities, we obtain the following estimate of the bilinear form
\[
 \|B(u, v) \|_{\mathcal{Y}_T} \le C (T^{1/2} + T^{1/2q})\|K^\prime
 \|_1 \|u\|_{\mathcal{Y}_T} \|v\|_{\mathcal{Y}_T}.
\]
Hence,  choosing $T > 0$ such that 
$
4 C (T^{1/2} + T^{1/2q})\|K^\prime
 \|_1 (\|u_0 \|_1 + \|u_0 \|_q) < 1,
$
we obtain the  solution in $\mathcal{Y}_T$ by \cite[Lemma 3.1]{KS09}.

Obviously, by an interpolation inequality, the solution 
of the integral equation \eqref{duhamel} in the space $\mathcal{Y}_T$ 
belongs also to $C([0, T], L^p (\R))$ for every $p\in [1,q]$.
To show that 
$
 u \in C((0, T], L^p (\R))
 $
for all $q < p \le \infty$, it is sufficient to note that $\G(t)*u_0$ belongs to this space by \eqref{G1}. Moreover,
for every $u, v
 \in \mathcal{Y}_T$,
\[
  \int_0^t \partial_x \G(\cdot, t-s)\ast (u(K^\prime \ast
 v))(s)\, ds \in C((0, T], L^p (\R))
\]
for each  $p\in (q, \infty]$
by estimates similar to those in \eqref{ex-101}.
\end{proof}

{\it Step 2. Regularity of mild solutions.}
Results on regularity of mild solutions to \eqref{intro-eq1}--\eqref{intro-eq2}  are
standard and well-known, see {\it e.g.}~the monograph by Pazy \cite{pazy}. 
In particular, by a bootstrap argument, one can show that 
any mild solution $u\in C((0,T], L^p(\R))$ obtained in  Lemma~\ref{local-ex} 
satisfies
$u\in C^1((0,T], L^p(\R))\cap C((0,T], W^{2,p}(\R))$.

{\it Step 3. Positivity and  mass conservation.}
If the initial condition is nonnegative, the same property is shared by the corresponding solution.
Obviously, this fact holds true  for the viscous 
transport equation  $u_t+u_{xx}+(b(x,t)u)_x=0$. Now, it suffices to substitute $b=K'*u$ to show that solutions to 
\eqref{intro-eq1} are nonnegative if  initial conditions are so.

Next, integrating equation \eqref{duhamel} with respect to $x$, using the Fubini theorem, and the identities
$
\int_{\R}\G(x,t)\,dx=1
$
and
$
\int_{\R}\partial_x \G(x,t)\,dx=0
$
for all $t>0$,
we obtain the conservation of the integral and the $L^1$-norm of nonnegative solutions stated in \eqref{mass}.

{\it Step 4. Uniform $L^p$-estimate.}
In order to show that the solution constructed in Lemma~\ref{local-ex} exists for all $T >
0$, it is sufficient to prove that its $L^q$-norm 
does not blow up in finite time. 
Here, we prove that, in fact, the quantity $\|u(t)\|_p$ is uniformly
bounded for large $t > 0$ for every $p\in [1,\infty)$.

\begin{prop}\label{sec-ex-prop1}
Assume that $u \in C([0,T], L^1(\R))\cap C([0,T], L^q(\R))$ is a nonnegative local-in-time solution 
of problem
 \eqref{intro-eq1}--\eqref{intro-eq2} for  some $T>0$, with the kernel $K'$, and the initial datum $u_0$ satisfying 
\eqref{sec2-eq1} and
 \eqref{sec2-eq2}, respectively.
For every $p \in [1, \infty)$ and for every $t_0>0$, there
 exists $C = C (p,t_0, \|K^\prime \|_1, \|u_0\|_q , \|u_0 \|_1)$ independent of
 $t$ and of $T>0$ such that 
$
 \|u (t)\|_p \le C
$
for all $t > 0$.
\end{prop}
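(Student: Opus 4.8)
The plan is to derive, for each fixed $p\in[2,\infty)$, a single ordinary differential inequality for the quantity $y_p(t)=\|u(t)\|_p^p$ and to read a uniform bound off it. Since $u\ge 0$ and $u(t)\in W^{2,p}(\R)$ for $t>0$ (by Lemma~\ref{local-ex} and Step~2), I may test equation \eqref{intro-eq1} with $p\,u^{p-1}$ and integrate by parts twice. The diffusion produces the dissipation $-\frac{4(p-1)}{p^2}\|(u^{p/2})_x\|_2^2$, while the nonlocal drift, after moving the derivative, contributes the sign-indefinite term $\frac{p-1}{p}\int_\R (u^p)_x\,(K'\ast u)\,dx$. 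The whole argument then rests on playing this drift term against the dissipation.

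To control the drift term I would use only $\|K'\ast u\|_\infty\le\|K'\|_1\|u\|_\infty$ together with the one-dimensional Gagliardo--Nirenberg--Sobolev inequality applied to $u^{p/2}$, which expresses $\|u\|_\infty$ through $\|u\|_p$ and $\|(u^{p/2})_x\|_2$. A single application of Young's inequality then absorbs a fixed fraction of the dissipation and leaves a remainder of the form $C(p,\|K'\|_1)\,\|u\|_p^{\,p(p+1)/(p-1)}$. On the other hand, a Nash-type inequality for $u^{p/2}$, \emph{anchored to a lower norm that has already been bounded}, estimates the surviving dissipation from below by a constant multiple of $\|u\|_p^{3p}$. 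Because $3p>p(p+1)/(p-1)$ exactly when $p>2$, the outcome is a differential inequality
\[
\frac{d}{dt}y_p \le -a\,y_p^{3}+b\,y_p^{(p+1)/(p-1)},\qquad a,b>0,
\]
in which the negative term dominates for large $y_p$; hence $y_p(t)$ cannot exceed the maximum of its starting value and the (explicit) positive root of the right-hand side, a bound independent of $t$ and of $T$.

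This mechanism is genuinely subcritical only for $p>2$, so I would organize it as a bootstrap: once $\|u\|_r$ is bounded for some $r>1$, the estimate above (with the lower norm taken as $\|u\|_{p/2}$) yields a uniform bound up to $p=2r$, and iterating the doubling $r\mapsto 2r$ reaches every finite exponent; the range $p\in[1,2]$ then follows by interpolation between the conserved mass \eqref{mass} and any bounded higher norm. The dependence on $t_0$ enters only for exponents above $q$: there $\|u(t_0)\|_p$ is finite by the smoothing estimate \eqref{G1} and Step~2, and continuity on the compact interval $[0,t_0]$ supplies the short-time control, so the bound holds for all $t>0$.

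The hard part is the \emph{seed} of this bootstrap, namely a uniform-in-time bound at a single exponent $r>1$ valid for \emph{arbitrary} data. The difficulty is structural: under the rescaling \eqref{ul:KL} both $\|u\|_1$ and $\|K'\|_1$ are invariant, so any estimate controlled purely through the conserved mass---in particular the $L^2$ energy identity, which is exactly scale-invariant---closes only when the scale-invariant coupling $\|K'\|_1\|u_0\|_1$ is small, which is precisely the regime of Theorem~\ref{sec2-th1}. To obtain the seed for large coupling I expect to avoid a pointwise differential inequality and instead establish a uniform space-time dissipation estimate---either by integrating the $L^2$ identity over unit time windows, or through the entropy functional $\int_\R u\ln u$---and to upgrade it to a pointwise bound via a uniform Gronwall-type lemma; alternatively one may exploit the one-dimensional feature that the primitive $U(x,t)=\int_{-\infty}^x u(y,t)\,dy$ stays in $[0,M]$ and run a regularization argument for the viscous transport equation it satisfies. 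Securing this base case is where the real work lies; the subcritical bootstrap described above is then routine.
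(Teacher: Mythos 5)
Your bootstrap step is sound for $p>2$, and your diagnosis of where it breaks is accurate: with the crude bound $\|K'\ast u\|_\infty\le\|K'\|_1\|u\|_\infty$, the case $p=2$ is exactly critical (the Nash lower bound for the dissipation and the drift remainder both scale like $(\|u\|_2^2)^3$), so the estimate closes only when $\|K'\|_1\|u_0\|_1$ is small, which is the regime of Theorem \ref{sec2-th1}, not of Proposition \ref{sec-ex-prop1}. But this leaves the proposition unproved: the whole statement hinges on the seed at some exponent $r>1$ for \emph{arbitrary} data, and none of your three suggested routes is carried out. Moreover, the first two run into the very obstruction you yourself identified --- the unit-time-window or uniform-Gronwall version of the $L^2$ identity, and likewise the entropy identity, are scale-invariant and again produce competing terms of the same critical power with no small parameter --- while the primitive $U(x,t)=\int_{-\infty}^x u(y,t)\,dy$ only reproduces the bound $0\le U\le M$, which gives no control on $u=U_x$. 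So there is a genuine gap, and it sits precisely where you located it.

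The paper closes this gap with one idea your proposal is missing: split the kernel as $K'=\widetilde K'+(K'-\widetilde K')$ with $\widetilde K'\in C^\infty_c(\R)$ and $\|K'-\widetilde K'\|_1\le\varepsilon$. For the smooth part one never invokes $\|u\|_\infty$ at all: instead $\|\widetilde K'\ast u\|_\infty\le\|\widetilde K'\|_\infty\|u\|_1=\|\widetilde K'\|_\infty\|u_0\|_1$ is controlled by the conserved mass, so after the $\varepsilon$-Young inequality its contribution is $C\,\|u(t)\|_p^p$ --- merely \emph{linear} in $y_p=\|u(t)\|_p^p$, with a possibly huge but harmless constant depending on $\|\widetilde K'\|_\infty$. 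For the rough part one does use Gagliardo--Nirenberg together with the inductive bound on $\|u\|_{p/2}$, exactly as you do, but its prefactor is now $\varepsilon$ rather than $\|K'\|_1$, so the resulting term $\varepsilon C\|(u^{p/2})_x\|_2^2+\varepsilon C$ is absorbed into the dissipation however large the data are. The outcome is the differential inequality $y_p'\le -c\,(1-\varepsilon C)\,y_p^3+C\,y_p+\varepsilon C$, in which the cubic term dominates the linear one for every $p\ge 2$: the criticality at $p=2$ disappears, the dyadic induction can start from the free case $p=1$ (mass conservation) instead of requiring a seed at some $r>1$, and no smallness of $\|K'\|_1\|u_0\|_1$ is needed. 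In short, the small parameter your scheme lacks is manufactured by approximating $K'$ in $L^1$, not by restricting the size of the data.
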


\begin{proof}
It follows from Lemma \ref{local-ex} that for every $t_0\in (0,T]$ we have $u(t_0)\in L^p(\R)$ for all $p\in [1,\infty]$.
Hence, without loss of generality, we can assume that $u_0\in L^p(\R)$ and $t_0=0$.
Moreover, the regularity of solutions discussed in Step 2 allows us to justify our calculations below.

We proceed by induction to show the uniform bound for the $L^p$-norm with $p = 2^n$, $n \in \N \cap
 \{0\}$. %
The estimates for other $p \in [1, \infty)$ are a simple
 consequence of the H\"older inequality.

For $p = 1$, we have
$
 \|u (t)\|_1 = \|u_0 \|_1
$
because $u$ is nonnegative and because the integral of $u$ is conserved
 in time, see \eqref{mass}. 

Now, let $\varepsilon>0$ be small and we fix it at the end of this proof.
For $p = 2^n$ with $n \ge 1$, we multiply equation
 \eqref{intro-eq1} by $u^{p-1}$ and integrate over $\R$ to obtain
\begin{equation}\label{sec3-eq6}
\begin{split}
\frac{1}{p}\frac{d}{dt}\int_{\R} u^p \, dx %
&= -\frac{4(p-1)}{p^2}\int_{\R} |(u^{p/2})_x|^2 \, dx + (p-1)\int_{\R} u^{p-1}u_x (\widetilde{K}^\prime \ast u)\,
 dx \\
&\quad + (p-1)\int_{\R} u^{p-1}u_x \left((K^\prime - \widetilde{K}^\prime)
 \ast u \right)\, dx, 
\end{split}
\end{equation}
where the auxiliary kernel $\widetilde{K}^\prime \in C^\infty_c
 (\R)$   satisfies   $\|K^\prime - \widetilde{K}^\prime
 \|_1 \leq  \varepsilon$ .

The second term on the right-hand side of \eqref{sec3-eq6} is estimated
 by the $\varepsilon$-Young inequality 
$ab\leq \varepsilon a^2+C(\varepsilon) b^2$
and by  \eqref{mass} as follows
\begin{equation}\label{sec3-eq6-bis}
\begin{split}
(p-1)\Big|\int_{\R} u^{p-1}u_x &(\widetilde{K}^\prime \ast u)\,
 dx \Big| \\
&\le (p-1)\varepsilon \int_{\R} u^{p-2}|u_x|^2 \, dx +
 C(\varepsilon)\|(\widetilde{K}^\prime \ast u)(t)\|_\infty^2 \int_{\R}u^p
 \, dx \\
&\le \frac{4(p-1)}{p^2}\varepsilon \int_{\R} |(u^{p/2})_x|^2 \, dx +
 C(\varepsilon)\|\widetilde{K}^\prime \|_\infty^2  \|u_0\|_1^2  \int_{\R}u^p
 \, dx.
\end{split}
\end{equation}

Concerning the third term on the right-hand side of \eqref{sec3-eq6}, it
 follows from the identity $u^{p-1}u_x=(2/p) (u^{p/2})_xu^{p/2}$, from the H\"older inequality, and from the Young inequality
 that
\begin{equation}\label{sec3-eee}
\begin{split}
(p-1)\Big|\int_{\R} u^{p-1} u_x &(K^\prime - \widetilde{K}^\prime)
 \ast u \, dx \Big| \\
&\le 
\frac{2(p-1)}{p} \|(u^{p/2})_x u^{p/2}\|_{\frac{p+2}{p+1}} \|(K^\prime -
 \widetilde{K}^\prime )\ast u (t)\|_{p+2} \\
&\le \frac{2(p-1)}{p} \|(u^{p/2})_x \|_2 \|u^{p/2}\|_{\frac{2(p+2)}{p}} \|K^\prime -
 \widetilde{K}^\prime \|_1 \|u (t)\|_{p+2} \\
&\le \frac{2(p-1)}{p} \varepsilon \|(u^{p/2})_x \|_2 \|u^{p/2}\|_{\frac{2(p+2)}{p}}^{1+2/p},
\end{split}
\end{equation} 
because $\|K^\prime - \widetilde{K}^\prime \|_1 \leq 
 \varepsilon$. %
Now, notice  that  the  Gagliardo-Nirenberg-Sobolev inequality leads to
\[
 \|u^{p/2}\|_{\frac{2(p+2)}{p}} \le C 
 \|(u^{p/2})_x \|_2^{(p+4)/(3(p+2))}  \|u^{p/2}\|_1^{2(p+1)/(3(p+2))}.
\]
Moreover,
 applying the induction  hypothesis for  $p/2 =
 2^{n-1}$, we have $\|u^{p/2} (t)\|_1=\|u(t)\|_{p/2}^{p/2}\leq C$ 
with $C>0$ independent of $t$ and $T>0$.
Hence, coming back to inequality \eqref{sec3-eee} we obtain
\begin{align}
 (p-1)\left|\int_{\R} u^{p-1} u_x \left((K^\prime - \widetilde{K}^\prime)
 \ast u \right)\, dx \right| \le %
\varepsilon C \|(u^{p/2})_x \|_2^{4(p+1)/3p}\label{sec-ex-eq3}
\end{align}
with $C = C(p, \|\widetilde{K}^\prime \|_\infty, \|u_0 \|_1)$ independent of
 $t$ and  $T$.

Since $\kappa (p)\equiv  4(p+1)/3p \leq 2$ for all $p\geq  2$, using the elementary inequality 
$s^{\kappa(p)}\leq C(s^2+1)$ for all $s\geq 0$ and fixed $C>0$ independent of $s$, we 
deduce from \eqref{sec-ex-eq3} that
\begin{align}
(p-1)\left|\int_{\R} u^{p-1} u_x \left((K^\prime - \widetilde{K}^\prime)
 \ast u \right)\, dx \right| \le %
C \varepsilon \|(u^{p/2})_x \|_2^{2} + \varepsilon C . \label{sec-ex-eq4}
\end{align}
Now,  by  estimates \eqref{sec3-eq6-bis} and \eqref{sec-ex-eq4}, we obtain from
 \eqref{sec3-eq6}  the following inequality
\begin{align}
\frac{d}{dt}\int_{\R} u^p \, dx \le -(1- \varepsilon C )\|(u^{p/2})_x (t)\|_2^2 + C
 \|u (t)\|_p^p + \varepsilon C, \label{sec-ex-eq6}
\end{align}
where  $C = C(p, \|K^\prime\|_1,  \|\widetilde K^\prime\|_\infty , \|u_0\|_1)$ denotes various constants independent of $\varepsilon$,
$u$, $t$, and $T$. 

Now, we require $\varepsilon<C^{-1}$ in the fist term on the right hand side of \eqref{sec-ex-eq6}. 
Using the Nash inequality
\begin{equation}\label{Nash}
  \|v\|_2 \le C \|v_x \|_2^{1/3}\|v\|_1^{2/3},
\end{equation}
which is valid for all $v \in L^1 (\R)$ such that $v_x \in L^2 (\R)$, and
 applying the inductive hypothesis (notice that $p/2 = 2^{n-1}$),  we have
\begin{align}
\|u(t)\|_p^{p/2}=\|u^{p/2} (t)\|_2 &\le C \|\left(u^{p/2} (t)\right)_x \|_2^{1/3}
 \|u^{p/2} (t)\|_1^{2/3} \le C \|\left(u^{p/2}(t)\right)_x \|_2^{1/3},\label{sec-ex-eq5}
\end{align}
where $C $ is independent  of
 $u$, $t$, and $T$.
 Hence,
applying  estimate  \eqref {sec-ex-eq5} in \eqref{sec-ex-eq6}     we obtain the following
 differential inequality for $\|u(t)\|_p^p$:
\[
 \frac{d}{dt}\|u(t)\|_p^p \le -C(1-\varepsilon C) \big(\|u(t)\|_{p}^p \big)^3+ C\|u(t)\|_p^p +
 \varepsilon C,
\]
where $C= C(p, K^\prime , \|u_0\|_1) > 0$ is independent  of $\varepsilon$,
$u$, $t$, and $T$. 

We leave for the reader the proof that any nonnegative solution of the
 differential inequality
$
 f^\prime \le - C (1-\varepsilon C) f^3 + C f + \varepsilon C
$
is bounded, provided  $\varepsilon>0$
 is sufficiently small.  %
Hence, by the recurrence argument, $\|u (t)\|_p^p$ is bounded
 for any $p = 2^n$, $n \in \N \cup \{0\}$ and  this completes the
 proof of  Proposition \ref{sec-ex-prop1}. 
\end{proof}

\begin{rem}
Notice that we do not control the growth in $p$ of the constants $C$ in Proposition \ref{sec-ex-prop1},
hence we are  not able to pass to the limit  $p \to \infty$  to obtain  the global-in-time
 bound of the $L^\infty$-norm of the solution. 
However, in Theorem \ref{sec2-th1}, we show a  decay estimate  of $\|u (t)\|_\infty $
under additional assumptions on the kernel $K'$.
\end{rem}

\begin{proof}[Proof of Theorem \ref{sec2-th0}.]
Steps 1--4, described above,  contain all details of the proof.
\end{proof}


\section{Optimal $L^p$-decay of solutions}

We are in a position to prove the decay estimates from Theorem \ref{sec2-th1} and we do it
 in two steps. 
First, we obtain the
optimal decay estimate of $L^2$-norm using  Gagliardo-Nirenberg-Sobolev 
inequalities. 
Next, estimates of other $L^p$-norms are shown by applying the
 integral formulation \eqref{duhamel} of the initial value problem \eqref{intro-eq1}-\eqref{intro-eq2}.


\begin{proof}[Proof of Theorem \ref{sec2-th1}]

Let $p=2$.
Multiplying equation \eqref{intro-eq1} by $u$ and integrating the resulting equation  over $\R$
we
 have
\begin{equation}\label{sec3-eq1}
\frac{1}{2}\frac{d}{dt} \int_{\R} u^2 \, dx =  -\int_{\R} |u_x|^2 \, dx
 + \int_{\R} uu_x (K^\prime \ast u)\, dx  
\end{equation}
By the H\"older inequality,  the Young inequality, 
the following Gagliardo-Nirenberg-Sobolev inequality  
\begin{align}
\|u (t) \|_4 \le C_{GNS}  \|u_x (t)\|_2^{1/2} \|u(t)\|_1^{1/2}, \label{sec3-eq2}
\end{align}
and identity \eqref{mass},  the
 second term on the right-hand side of \eqref{sec3-eq1} is estimated as
 follows
 \begin{equation}\label{sec3-eq2-bis}
\begin{split}
\int_{\R} uu_x (K^\prime \ast u)\, dx &\le \|uu_x (t)\|_{4/3}
 \|(K^\prime \ast u) (t)\|_4 \\
&\leq  \|u(t)\|_4^2 \|u_x (t)\|_2 \|K^\prime \|_1 \\
&\leq  C_{GNS} \|u_x (t)\|_2^2 \|K^\prime \|_1 \|u_0\|_1 .
\end{split}
\end{equation}
Coming back to \eqref{sec3-eq1} we see that 
\begin{equation*}
\frac{1}{2}\frac{d}{dt}\|u(t)\|_2^2 \le  - \left(1 - C_{GNS} \|K^\prime \|_1 \|u_0 \|_1 \right)\|u_x (t)\|_2^2, 
\end{equation*}
hence, 
if $ \|K^\prime \|_1 \|u_0 \|_1<1/C_{GNS} $,  we
 obtain
\begin{align}
\frac{d}{dt} \|u(t)\|_2^2 + C \|u_x (t)\|_2^2 \le 0, \label{sec3-eq3}
\end{align}
where $C= C (\|K^\prime\|_1, \|u_0 \|_1) > 0$. 
Now, by the Nash inequality \eqref{Nash}, since the $L^1$-norm of the solution is constant in time by  \eqref{mass},
 we obtain the differential
inequality
\begin{equation}
\frac{d}{dt} \|u(t)\|_2^2 + C \|u_0\|_1^{-4} \big(\|u(t)\|_2^2\big)^3  \le 0, \label{sec3-eq5}
\end{equation}
which implies 
 $\|u(t)\|_2 \le C t^{-1/4}$ for all $t>0$ and $C>0$ independent of $t$.

Now, we are going to use systematically the above $L^2$-decay estimate 
 to show the decay of other $L^p$-norms.  
First, we consider $p \in [1, \infty)$ and we compute the $L^p$-norm of both sides of the integral equation \eqref{duhamel}
to obtain
\begin{equation}\label{Lp:est}
\begin{split}
\|u(t)\|_p &\le \|\G(\cdot, t)\ast u_0 \|_p + \int_0^t \|\partial_x \G(\cdot, t-s)\ast \left(u
 (K^\prime \ast u)\right)(s)\, ds \|_p \, ds \\
&\le C t^{-\frac{1}{2}\left(1-\frac{1}{p}\right)}\|u_0 \|_1 + C \int_0^t
 (t-s)^{-\frac{1}{2}\left(1-\frac{1}{p}\right)-\frac{1}{2}}\|K^\prime
 \|_1 \|u(s)\|_2^2 \, ds
\end{split}
\end{equation}
after applying inequalities \eqref{G1}-\eqref{G2}.
Hence, by the $L^2$-decay estimate, we have
\begin{align*}
\|u(t)\|_p &\le  C t^{-\frac{1}{2}\left(1-\frac{1}{p}\right)}\|u_0 \|_1
 + C(\|K^\prime \|_1 , \|u_0 \|_1) \int_0^t
 (t-s)^{-\frac{1}{2}\left(1-\frac{1}{p}\right)-\frac{1}{2}}s^{-\frac{1}{2}}\,
 ds \\
&= C  t^{-\frac{1}{2}\left(1-\frac{1}{p}\right)} \quad \mbox{for all} \quad t>0,
\end{align*}
because 
$\int_0^t
 (t-s)^{-\frac{1}{2}\left(1-\frac{1}{p}\right)-\frac{1}{2}}s^{-\frac{1}{2}}\,
 ds =
 t^{-\frac{1}{2}\left(1-\frac{1}{p}\right)} \int_0^1
 (1-\rho)^{-\frac{1}{2}\left(1-\frac{1}{p}\right)-\frac{1}{2}}
 \rho^{-\frac{1}{2}}\, d\rho$.

To deal with  the case $p = \infty$, we use the already proved decay estimate  for any $p > 2$. 
Proceeding in a  way similar to that in \eqref{Lp:est}, we obtain 
\begin{align*}
\|u(t)\|_\infty &\le \|\G(\cdot, t) \ast u_0 \|_\infty + \int_0^t
 \|\partial_x \G(\cdot, t-s)\ast (u(K^\prime \ast u))(s)\|_\infty \, ds\\
&\le Ct^{-\frac{1}{2}}\|u_0\|_1 + \int_0^t C
 (t-s)^{-\frac{1}{p}-\frac{1}{2}} \|u(K^\prime \ast u)(s)\|_{p/2}\, ds\\
&\le Ct^{-\frac{1}{2}}\|u_0\|_1 + \int_0^t C
 (t-s)^{-\frac{1}{p}-\frac{1}{2}} \|K^\prime \|_1 \|u (s)\|_p^2 \, ds\\
  &\le C t^{-\frac{1}{2}}\|u_0 \|_1 + C (p, \|K^\prime
 \|_1 , \|u_0\|_1)\int_0^t (t-s)^{-\frac{1}{p}-\frac{1}{2}}
 s^{-\left(1-\frac{1}{p}\right)}\, ds\\
&=C(p, \|K^\prime
 \|_1 , \|u_0\|_1) t^{-\frac{1}{2}}
\end{align*}
for all $t>0$. This completes the proof of 
Theorem \ref{sec2-th1}.
\end{proof}


In the proof of Theorem \ref{thm:Lp:decay}, we need the following auxiliary result.

\begin{lemma}\label{sec3-new-lem3}
Let $v_x \in L^2 (\R)$. For $\widetilde{K}^\prime \in C^\infty_c
 (\R)$, we define
$
 \widetilde{K} (x) = \int_{-\infty}^x \widetilde{K}^\prime (y)\, dy.
$
Denote
$
 A = \int_{\R} \widetilde{K}^\prime (y)\, dy = \lim_{y \to +\infty}\widetilde{K}(y).
$
Then 
\[
 \|\widetilde{K}^\prime \ast v - Av \|_\infty \le
 \left(\|\widetilde{K}\|_{L^2 (-\infty, 0)} + \|\widetilde{K} -A\|_{L^2
 (0, +\infty)}\right)\|v_x\|_2.
\]
\end{lemma}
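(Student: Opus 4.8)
The plan is to rewrite $\widetilde{K}^\prime\ast v-Av$ as a single integral against the increments of $v$, and then to transfer the derivative from $v$ onto $\widetilde{K}^\prime$ by an integration by parts, so that the resulting kernel becomes square integrable. Since $A=\int_\R \widetilde{K}^\prime(y)\,dy$, for every fixed $x\in\R$ I would first write
\[
(\widetilde{K}^\prime\ast v)(x)-Av(x)=\int_\R \widetilde{K}^\prime(y)\bigl(v(x-y)-v(x)\bigr)\,dy,
\]
where the increment $v(x-y)-v(x)$ is well defined and absolutely continuous in $y$ because $v_x\in L^2(\R)$.

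Next I would split this integral at $y=0$ and integrate by parts on each half-line, choosing on $(-\infty,0)$ the antiderivative $\widetilde{K}(y)$ of $\widetilde{K}^\prime$ and on $(0,\infty)$ the antiderivative $\widetilde{K}(y)-A$. Both choices vanish at the infinite endpoints because $\widetilde{K}^\prime\in C_c^\infty(\R)$ (so $\widetilde{K}\equiv 0$ far to the left and $\widetilde{K}\equiv A$ far to the right), while the factor $v(x-y)-v(x)$ vanishes at $y=0$; hence all boundary terms disappear, and using $\partial_y\bigl(v(x-y)-v(x)\bigr)=-v_x(x-y)$ I obtain
\[
(\widetilde{K}^\prime\ast v)(x)-Av(x)=\int_{-\infty}^0 \widetilde{K}(y)\,v_x(x-y)\,dy+\int_0^{\infty}\bigl(\widetilde{K}(y)-A\bigr)v_x(x-y)\,dy.
\]

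Then I would apply the Cauchy--Schwarz inequality in the variable $y$ to each of the two integrals, which produces the factors $\|\widetilde{K}\|_{L^2(-\infty,0)}$ and $\|\widetilde{K}-A\|_{L^2(0,\infty)}$ together with $\bigl(\int_{-\infty}^0|v_x(x-y)|^2\,dy\bigr)^{1/2}$ and $\bigl(\int_0^\infty|v_x(x-y)|^2\,dy\bigr)^{1/2}$, respectively. The substitution $s=x-y$ turns these last two factors into $\bigl(\int_x^{\infty}|v_x(s)|^2\,ds\bigr)^{1/2}$ and $\bigl(\int_{-\infty}^x|v_x(s)|^2\,ds\bigr)^{1/2}$, each of which is bounded by $\|v_x\|_2$. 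Summing the two contributions and then taking the supremum over $x\in\R$ gives precisely the claimed estimate.

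The only delicate point is the justification of the integration by parts and the vanishing of the boundary terms; this rests entirely on the compact support of $\widetilde{K}^\prime$, which makes $\widetilde{K}$ and $\widetilde{K}-A$ compactly supported on the respective half-lines, and on the continuity of the increment $v(x-y)-v(x)$ at $y=0$ coming from $v_x\in L^2(\R)$. Everything else is a routine application of Cauchy--Schwarz and a change of variables, so I do not expect any genuine obstacle beyond handling the two half-lines separately with their two different antiderivatives.
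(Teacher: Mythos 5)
Your proof is correct and takes essentially the same route as the paper: both arguments reduce the claim to the identity $\widetilde{K}^\prime\ast v(x)-Av(x)=\int_{-\infty}^0\widetilde{K}(y)\,v_x(x-y)\,dy+\int_0^{+\infty}\bigl[\widetilde{K}(y)-A\bigr]v_x(x-y)\,dy$ and then apply the Cauchy--Schwarz inequality on each half-line, using that the compact support of $\widetilde{K}^\prime$ makes $\widetilde{K}$ and $\widetilde{K}-A$ square integrable on $(-\infty,0)$ and $(0,+\infty)$ respectively. If anything, your derivation via the increments $v(x-y)-v(x)$ is slightly more careful than the paper's intermediate step $Av(x)=A\int_0^{+\infty}v_x(x-y)\,dy$, which tacitly uses $v(x-R)\to 0$ as $R\to\infty$, a property not guaranteed by $v_x\in L^2(\R)$ alone, whereas your boundary terms vanish for the stated reasons without any decay assumption on $v$.
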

\begin{proof}
First, notice that $\widetilde{K} \in L^2 ((-\infty, 0])$ and
 $\widetilde{K}-A \in L^2 ([0, +\infty))$, because $\widetilde{K}^\prime
 \in C^\infty_c (\R)$. %
Hence, the proof is the immediate consequence of the integration by
 parts and of the Schwartz inequality in view of the following
 identities %
\begin{align*}
\widetilde{K}^\prime * v(x)-Av(x)=
&\int_{\R} \widetilde{K}(y)v_x(x-y)\, dy - A \int_0^{+\infty}v_x(x-y)\,
 dy \\
=& \int_{-\infty}^0 \widetilde{K}(y)v_x(x-y)\, dy +
 \int_0^{+\infty}\left[\widetilde{K}(y) - A \right]v_x (x-y)\, dy.
\end{align*}
\end{proof}

\begin{proof}[Proof of Theorem  \ref{thm:Lp:decay}]
Without loss of generality, we can assume that $\|u(t)\|_2\to 0$ as $t\to\infty$,
because this is the immediate consequence of the H\"older inequality, Proposition~\ref{sec-ex-prop1},
and the assumption on the decay of the $L^{p_0}$-norm.

To show the optimal decay of the $L^2$-norm, we use the following equality ({\it cf. \eqref{sec3-eq6}})
\begin{equation}\label{sec3-new-eq7}
\begin{split}
 \frac{1}{2}\frac{d}{dt}\|u(t)\|_2^2 =& - \|u_x (t)\|_2^2 
+ \int_{\R}u(t)
 u_x (t)(K^\prime-\widetilde K') \ast u(t)\, dx\\
&+ \int_{\R}u(t)
 u_x (t)\widetilde K^\prime \ast u(t)\, dx,
\end{split}
\end{equation}
where  $\widetilde{K} \in C^\infty_c (\R)$ satisfies 
 $\|K^\prime -
 \widetilde{K}^\prime \|_1 \le \varepsilon$ with $\varepsilon>0$ to be chosen later on.
 
It follows from the H\"older inequality, the Young inequality,  and from the  Gagliardo-Nirenberg-Sobolev inequality 
\eqref{sec3-eq2} (see the calculations which lead to \eqref{sec3-eq2-bis})
that
\begin{equation}\label{A}
\begin{split}
\left|\int_{\R} u(t)u_x(t) (K^\prime - \widetilde{K}^\prime) \ast u(t)\, dx \right|%
&\le C \|u_x (t)\|_2^2 \|K^\prime - \widetilde{K}^\prime \|_1 \|u(t)\|_1 \\
&\leq C \varepsilon \|u_0\|_1\|u_x (t)\|_2^2.
\end{split}
\end{equation}
Next, notice that $\int_{\R}u^2 u_x \, dx = 0$ for all $u\in W^{1,2}(\R)$,
hence, for $A=\int_\R  \widetilde{K}'(y)\,dy$,
 the last term on the right-hand side of \eqref{sec3-new-eq7} is
 estimated as follows
\begin{equation}\label{B}
\begin{split}
\left|\int_{\R}u (t)u_x(t) \widetilde{K}^\prime \ast u(t)\, dx \right| &= %
\left|\int_{\R} u (t)u_x(t) \left(\widetilde{K}^\prime \ast u (t) - A
 u(t)\right)\, dx \right|\\
&\le \|\widetilde{K}^\prime \ast u (t) - A u(t)\|_\infty \|u(t)\|_2
 \|u_x (t)\|_2.
\end{split}
\end{equation}

Now, we apply both estimates \eqref{A} and \eqref{B} 
in equality \eqref{sec3-new-eq7}. Using, moreover,
  Lemma~\ref{sec3-new-lem3} to deal with the last term on the right-hand side of \eqref{B}, 
we deduce  the following
 inequality
\begin{align}
\frac{1}{2}\frac{d}{dt}\|u(t)\|_2^2 \le \left(-1 + C\varepsilon \|u_0
 \|_1 + C\|u(t)\|_2 \left(\|\widetilde{K}\|_{L^2 (-\infty, 0)} +
 \|\widetilde{K} - A\|_{L^2 (0, +\infty)}\right)\right)\|u_x (t)\|_2^2. \label{sec3-new-eq9}
\end{align}
Hence, for sufficiently small $\varepsilon > 0$  and for sufficiently large $T_0 > 0$, 
since $\|u(t)\|_2\to 0$ if $t\to\infty$,
we obtain the estimate
\begin{align}
\frac{1}{2}\frac{d}{dt}\|u(t)\|_2^2 \le -C \|u_x (t)\|_2^2 \label{sec3-new-eq10}
\end{align}
for all $t \ge T_0$ and $C>0$ independent of $t$ and $u$.
Now, it remains to repeat the reasoning from the proof of Theorem \ref{sec2-th1} for $p=2$
(see inequalities \eqref{sec3-eq3}-\eqref{sec3-eq5}) to obtain the required decay of the $L^2$-norm.
To show the decay estimate for other $L^p$-norms, one should copy the corresponding arguments from the proof of Theorem 
\ref{sec2-th1}.
\end{proof}


\section{Self-similar large time behavior}
Our goal in this section is to prove Theorem \ref{sec2-th2}. Here, we
always assume that $u = u(x, t)$ is the nonnegative global-in-time
solution of the initial value problem
\eqref{intro-eq1}--\eqref{intro-eq2} with $K$ and $u_0$ satisfying
\eqref{sec2-eq1} and \eqref{sec2-eq2}, respectively. Moreover, we assume that this solution satisfies the following
 decay estimates
\begin{align}
\|u(t)\|_p \le C t^{-\frac{1}{2}\left(1-\frac{1}{p}\right)} 
 \label{sec4-eq1}
\end{align}
for each $p \in [1, \infty]$, all $t > 0$, and $C$ independent of
$t$. 

The proof that the large time behavior of the solution $u = u(x, t)$ is
described either by the
fundamental solution of the heat equation or by the self-similar solution of the viscous Burgers
equation is based on the so-called scaling method which is often use in the study of
asymptotic properties of solutions to nonlinear evolution equation (see {\it e.g.} the review article
\cite{V02}
for some applications of this method to the porous media equation).
Here, for every $\lambda>0$, we denote by  $u_\lambda (x, t) = \lambda u(\lambda x, \lambda^2 t)$  the
solution of the initial value problem
\eqref{eq:lambda}--\eqref{ini:lambda}. 
In the following, we  systematically use identities \eqref{ul:Kl:L1} as well as the decay estimate \eqref{sec4-eq1}.

Now, we prove a series of technical lemmas which 
usually should be obtained to apply the scaling method.
\begin{lemma}\label{sec4-lem1}
For each $p \in [1, \infty]$ there exists $C=C(\|K^\prime \|_1 , \|u_0
 \|_1)>0$, independent of $t$ and of $\lambda$, such that  
\begin{align}
\|u_\lambda (t)\|_p \le Ct^{-\frac{1}{2}\left(1-\frac{1}{p}\right)} \label{sec4-eq4:ul}
\end{align}
for all $t > 0$ and all $\lambda > 0$.
\end{lemma}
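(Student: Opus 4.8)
The plan is to reduce the estimate for the rescaled family directly to the already-available decay estimate \eqref{sec4-eq1} for $u$ itself, exploiting the fact that the scaling in \eqref{ul:KL} is precisely the one under which the $L^p$-norm transforms homogeneously. First, I would fix $p\in[1,\infty)$ and compute $\|u_\lambda(t)\|_p$ straight from the definition $u_\lambda(x,t)=\lambda u(\lambda x,\lambda^2 t)$. Writing out the $L^p$-integral and substituting $y=\lambda x$ (so that $dx=\lambda^{-1}\,dy$) gives
\[
\|u_\lambda(t)\|_p^p=\lambda^p\int_\R|u(\lambda x,\lambda^2 t)|^p\,dx=\lambda^{p-1}\int_\R|u(y,\lambda^2 t)|^p\,dy=\lambda^{p-1}\|u(\lambda^2 t)\|_p^p,
\]
so that $\|u_\lambda(t)\|_p=\lambda^{1-1/p}\|u(\lambda^2 t)\|_p$. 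The same identity holds in the case $p=\infty$ by taking the supremum in place of the integral, with the usual convention $1/\infty=0$.

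Next, I would insert the assumed decay estimate \eqref{sec4-eq1}, namely $\|u(s)\|_p\le Cs^{-(1-1/p)/2}$, evaluated at $s=\lambda^2 t$. This yields
\[
\|u_\lambda(t)\|_p\le\lambda^{1-1/p}\,C\,(\lambda^2 t)^{-(1-1/p)/2}=C\,\lambda^{1-1/p}\,\lambda^{-(1-1/p)}\,t^{-(1-1/p)/2}=C\,t^{-(1-1/p)/2}.
\]
The powers of $\lambda$ cancel exactly, which is the entire point of the self-similar scaling: the constant $C$ produced this way is literally the constant from \eqref{sec4-eq1}, which depends only on $p$, $\|K^\prime\|_1$ and $\|u_0\|_1$ (cf. Theorem \ref{sec2-th1}) and is independent of both $t$ and $\lambda$. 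This is precisely \eqref{sec4-eq4:ul}, and it holds uniformly for all $t>0$ and all $\lambda>0$.

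There is essentially no analytic obstacle here: the statement is a pure scaling identity combined with the previously established decay of $u$. The only point requiring minor care is the bookkeeping of the exponents—confirming that the gain $\lambda^{1-1/p}$, coming jointly from the prefactor $\lambda$ and the change of variables, is exactly balanced by the loss $\lambda^{-(1-1/p)}$ arising from evaluating the decay rate at the rescaled time $\lambda^2 t$. It is exactly this balance that upgrades the bound from a merely $\lambda$-dependent one to a genuinely uniform estimate, which is what the scaling method in the proof of Theorem \ref{sec2-th2} requires.
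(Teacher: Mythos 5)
Your proof is correct and is essentially identical to the paper's own argument: both compute $\|u_\lambda(t)\|_p=\lambda^{1-1/p}\|u(\cdot,\lambda^2 t)\|_p$ by the change of variables and then apply the decay estimate \eqref{sec4-eq1} at time $\lambda^2 t$, so that the powers of $\lambda$ cancel exactly. Nothing is missing.
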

\begin{proof}
By the change of variables and estimate  \eqref{sec4-eq1} we obtain
\begin{align*}
\|u_\lambda (t)\|_p =\lambda^{1-\frac{1}{p}} \|u(\cdot,
 \lambda^2 t)\|_p\le C \lambda^{1-\frac{1}{p}}\left(\lambda^2 t
 \right)^{-\frac{1}{2}\left(1-\frac{1}{p}\right)} = C t^{-\frac{1}{2}\left(1-\frac{1}{p}\right)}.
\end{align*}
\end{proof}

\begin{lemma}\label{sec4-lem2}
For each $p \in [1, \infty)$ there exists $C=C(p,\|K^\prime \|_1 , \|u_0
 \|_1)>0$, independent of $t$ and of $\lambda$, such that  
$
\|\partial_x u_\lambda (t)\|_p \le C
 t^{-\frac{1}{2}\left(1-\frac{1}{p}\right)-\frac{1}{2}} 
$
for all $t > 0$ and all $\lambda > 0$.
\end{lemma}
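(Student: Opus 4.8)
The plan is to work from the Duhamel (mild) formulation of the rescaled problem \eqref{eq:lambda}--\eqref{ini:lambda} and to exploit parabolic smoothing. Since the short-time singularity of $\partial_x u_\lambda$ is exactly what we want to quantify, I would not integrate from $t=0$ but instead restart the equation at time $t/2$, writing for $t>0$
$$
u_\lambda(t)=\G(\cdot,t/2)\ast u_\lambda(t/2)-\int_{t/2}^t\partial_x\G(\cdot,t-s)\ast N_\lambda(s)\,ds,\qquad N_\lambda:=u_\lambda\,(K'_\lambda\ast u_\lambda),
$$
and then differentiating in $x$. The first term is harmless: by \eqref{G2} with $q=1$ together with $\|u_\lambda(t/2)\|_1=\|u_0\|_1$ from \eqref{ul:Kl:L1}, one gets $\|\partial_x\G(\cdot,t/2)\ast u_\lambda(t/2)\|_p\le C t^{-\frac12(1-1/p)-\frac12}\|u_0\|_1$, which is already the desired bound. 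Throughout I would use Lemma~\ref{sec4-lem1}, in particular $\|u_\lambda(s)\|_\infty\le Cs^{-1/2}$ and $\|u_\lambda(s)\|_2\le Cs^{-1/4}$, to estimate the nonlinearity, noting $\|N_\lambda(s)\|_1\le\|K'\|_1\|u_\lambda(s)\|_2^2\le Cs^{-1/2}$.

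Differentiating the memory term produces $\int_{t/2}^t\partial_x^2\G(\cdot,t-s)\ast N_\lambda(s)\,ds$, and here lies the main obstacle. The second spatial derivative of the heat kernel obeys, via $\partial_x^2\G(\cdot,\tau)=\partial_x\G(\cdot,\tau/2)\ast\partial_x\G(\cdot,\tau/2)$ and two applications of \eqref{G2}, the bound $\|\partial_x^2\G(\cdot,\tau)\ast f\|_p\le C\tau^{-\frac12(1/q-1/p)-1}\|f\|_q$, which is valid only for $q\le p$; but for such $q$ the time exponent is $\le-1$, so the integral diverges at $s=t$. The cure is to keep only one derivative on the kernel near the diagonal, moving the other onto the nonlinearity through $\partial_x^2\G\ast N_\lambda=\partial_x\G\ast\partial_x N_\lambda$, at the price that $\partial_x N_\lambda=(\partial_x u_\lambda)(K'_\lambda\ast u_\lambda)+u_\lambda(K'_\lambda\ast\partial_x u_\lambda)$ reintroduces $\partial_x u_\lambda$ and gives $\|\partial_x N_\lambda(s)\|_p\le 2\|K'\|_1\|u_\lambda(s)\|_\infty\|\partial_x u_\lambda(s)\|_p$. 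The estimate thus becomes self-referential.

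To close it without any smallness assumption on $\|K'\|_1$, I would split the memory integral at $s=t(1-\eta)$ for a fixed $\eta\in(0,1/2)$ to be chosen. On $[t/2,t(1-\eta)]$ the factor $t-s\ge\eta t$ is bounded away from the singularity, so the two-derivative kernel estimate with $q=1$ and $\|N_\lambda(s)\|_1\le Cs^{-1/2}$ yields a contribution $\le C(\eta)\,t^{-\frac12(1-1/p)-\frac12}$. On the near-diagonal piece $[t(1-\eta),t]$ I distribute the derivative as above and use \eqref{G2} with $q=p$, obtaining
$$
\Big\|\int_{t(1-\eta)}^t\partial_x\G(\cdot,t-s)\ast\partial_x N_\lambda(s)\,ds\Big\|_p\le C\|K'\|_1\int_{t(1-\eta)}^t(t-s)^{-1/2}s^{-1/2}\|\partial_x u_\lambda(s)\|_p\,ds.
$$
Since $\int_{t(1-\eta)}^t(t-s)^{-1/2}s^{-1/2}\,ds\le C\eta^{1/2}$ uniformly in $t$, the feedback coefficient is $C\|K'\|_1\eta^{1/2}$, which I make $\le 1/2$ by fixing $\eta$ small depending only on $\|K'\|_1$. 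Writing $B(T)=\sup_{0<t\le T}t^{\frac12(1-1/p)+\frac12}\|\partial_x u_\lambda(t)\|_p$, the three contributions combine into $t^{\frac12(1-1/p)+\frac12}\|\partial_x u_\lambda(t)\|_p\le C(\eta,\|u_0\|_1)+\tfrac12 B(T)$ for all $t\le T$, whence $B(T)\le 2C(\eta,\|u_0\|_1)$ once one knows $B(T)<\infty$. The finiteness of $B(T)$ for each fixed $T$, together with the admissible short-time behaviour, follows from the local existence and regularity theory of Section~3 (Lemma~\ref{local-ex}) applied to $u_\lambda$; the resulting bound is independent of $T$ and of $\lambda$ because $\eta$ and all constants depend only on $\|K'\|_1$ and $\|u_0\|_1$. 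Letting $T\to\infty$ gives the claim. The only genuinely delicate point is this self-referential closure, and it is the fractional time-split — which forces the feedback constant strictly below one — that makes it work uniformly in $\lambda$.
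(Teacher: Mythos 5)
Your kernel estimates and the $\eta$-splitting of the memory integral are essentially correct: the two-derivative bound away from the diagonal, the transfer of one derivative onto $N_\lambda$ near the diagonal via $\partial_x^2\G\ast N_\lambda=\partial_x\G\ast\partial_x N_\lambda$, and the computation $\int_{t(1-\eta)}^t(t-s)^{-1/2}s^{-1/2}\,ds\le C\eta^{1/2}$ are all fine, and the resulting inequality $B(T)\le C+\tfrac12B(T)$ would indeed close \emph{provided} $B(T)<\infty$. That proviso is the genuine gap. Finiteness of $B(T)=\sup_{0<t\le T}t^{\frac12(1-1/p)+\frac12}\|\partial_x u_\lambda(t)\|_p$ is a quantitative statement about the blow-up rate of $\|\partial_x u_\lambda(t)\|_p$ as $t\to0^+$, and it does not follow from what Section 3 actually provides: Lemma~\ref{local-ex} and the regularity step give $u_\lambda\in C((0,T],W^{2,p}(\R))$, hence finiteness and continuity of $\|\partial_x u_\lambda(t)\|_p$ on compact subsets of $(0,T]$, but no rate as $t\to0^+$. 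Nor can your self-referential inequality supply this by itself: the functional inequality $f(t)\le C_0+\tfrac12\sup_{s\in[t(1-\eta),t]}f(s)$ is perfectly consistent with $f(t)\to\infty$ as $t\to0^+$ (for instance $f(t)=t^{-\delta}$ satisfies it for small $t$ as soon as $\tfrac12(1-\eta)^{-\delta}\ge1$), so boundedness cannot be bootstrapped down to $t=0$ from the inequality alone. Repairing this requires a genuine short-time smoothing estimate for the mild solution, e.g.\ incorporating a weighted gradient norm such as $\sup_{0<t\le T}t^{1/2}\|\partial_x u_\lambda(t)\|_q$ into the fixed-point space of Lemma~\ref{local-ex}, exploiting that $u_{0,\lambda}\in L^1(\R)\cap L^q(\R)$ makes the linear part strictly less singular than the critical rate; this is standard but is an additional argument, not a citation.

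It is worth seeing how the paper avoids this issue entirely, with no smallness condition and no time-splitting. The paper restarts the Duhamel formula at time $1$, where $u_\lambda(1)$ is already smooth and $\|u_\lambda(1)\|_1=\|u_0\|_1$ by \eqref{ul:Kl:L1}, writes the equation for $\partial_x u_\lambda(t+1)$ with the derivative distributed onto the nonlinearity under a single $\partial_x\G$, bounds $\|u_\lambda(s+1)\|_\infty\le C$ uniformly in $\lambda$ and $s$ by Lemma~\ref{sec4-lem1}, and applies the singular Gronwall lemma. This gives only $\|\partial_x u_\lambda(t+1)\|_p\le C(t)$ with a constant that may grow in $t$ but is independent of $\lambda$ --- no decay is claimed at this stage, which is why no smallness of the feedback coefficient is needed. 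The decay is then manufactured purely by scaling: evaluating at the single time $t=1$ gives $\|\partial_x u_\lambda(2)\|_p\le C$ uniformly in $\lambda$, and the identity $\|\partial_x u_\lambda(2)\|_p=\lambda^{2-1/p}\|\partial_x u(\cdot,2\lambda^2)\|_p$ with $\lambda=\sqrt{t/2}$ converts this into the claimed rate for all $t>0$. This is the structural point your proof misses: since the family $\{u_\lambda\}_{\lambda>0}$ is scaling-covariant, a uniform-in-$\lambda$ bound at one positive time is \emph{equivalent} to the full decay estimate, so there is no need to run a weighted bootstrap down to $t=0$ at all.
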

\begin{proof}
Here, we use the the following counterpart of the integral equation \eqref{duhamel}
\begin{align*}
\partial_x u_{\lambda} (t+1) &= \partial_x \G(t) \ast u_\lambda (1) \\
& - \int_0^t
 \partial_x \G(t-s) \ast \Big((u_\lambda)_x \left(K^\prime_\lambda \ast
 u_\lambda \right) + u_\lambda \left(K^\prime_\lambda \ast (u_\lambda)_x \right)
 \Big)(s+1)\, ds
\end{align*}
for all $t>0$. Hence, computing the $L^p$-norm and using \eqref{G1}, \eqref{G2}, \eqref{ul:Kl:L1} we obtain
\begin{align*}
\| \partial_x u_{\lambda} (t+1) \|_p &\le %
C t^{-\frac{1}{2}\left(1-\frac{1}{p}\right)-\frac{1}{2}}\|u_\lambda (1)
 \|_1 \\
&\ + \int_0^t C (t-s)^{-\frac{1}{2}}\Big(\|(u_\lambda)_x \left(K^\prime_\lambda \ast
 u_\lambda \right)(s+1)\|_p + \| u_\lambda \left(K^\prime_\lambda \ast
 (u_\lambda)_x \right) (s+1)\|_p \Big)\, ds \\
&\le C t^{-\frac{1}{2}\left(1-\frac{1}{p}\right)-\frac{1}{2}}\|u_0
 \|_1 \\
&\ + C \|K' \|_1 \int_0^t (t-s)^{-\frac{1}{2}}  \|\partial_x u_\lambda (s +
 1)\|_p \|u_\lambda (s + 1)\|_\infty \, ds.
\end{align*}
Next,  we use inequality \eqref{sec4-eq4:ul} with $p=\infty$:
\begin{align*}
\|u_\lambda (s + 1)\|_\infty \le C(\|K^\prime \|_1 , \|u_0
 \|_1)(s+1)^{-\frac{1}{2}}
\le C(\|K^\prime \|_1 , \|u_0
 \|_1)
\end{align*}
for all $s>0$, consequently, 
\begin{align*}
\| \partial_x u_{\lambda} (t+1) \|_p 
&\le C t^{-\frac{1}{2}\left(1-\frac{1}{p}\right)-\frac{1}{2}}\|u_0 \|_1
+ C  \int_0^t (t-s)^{-\frac{1}{2}}\|\partial_x u_\lambda (s + 1)\|_p \, ds.
\end{align*}
Applying the singular Gronwall lemma (see {\it e.g.} \cite[Ch.~7]{H81} ), we conclude that 
\[
 \| \partial_x u_{\lambda} (t+1) \|_p \le C(p,t, \|K^\prime \|_1 , \|u_0 \|_1),
\]
for all $t>0$, where the right-hand side of this inequality is independent of $\lambda$.
In particular, for $t = 1$, there exists $C = C(\|K^\prime \|_1 ,
 \|u_0 \|_1)$ independent of $\lambda$ such that 
$
 \|\partial_x u_\lambda (2)\|_p \le C 
$
for all $ \lambda > 0$.
Next, using the definition of $u_\lambda$, we obtain
$
  \|\partial_x u_\lambda (2)\|_p = \lambda^{2-\frac{1}{p}}\|\partial_x
 u(\cdot, 2\lambda^2)\|_p.
$
Hence, after substituting $\lambda = \sqrt{t/2}$, we arrive at 
$
\| \partial_x u(\cdot , t) \|_p \le C t^{-\frac{1}{2}\left(1-\frac{1}{p}\right)-\frac{1}{2}}
$
for all $t > 0$.
\end{proof}

The proof of our next lemma relies on a form of Aubin-Simon's compactness result that we recall below.

\begin{theorem}[{\cite[Theorem 5]{Simon}}]\label{simon}
Let $X$, $B$ and $Y$ be Banach spaces satisfying $X \subset B \subset Y$
 with compact embedding $X \subset B$. Assume, for $1 \le p \le
 \infty$ and $T > 0$, that 
\begin{itemize}
\item $F$  is bounded in  $L^p (0, T; X)$, 
\item $\{\partial_t f\,:\, f\in F\}$  is bounded in  $L^p (0, T; Y)$. 
\end{itemize}
Then $F$ is relatively compact in $L^p (0, T; B)$ {\rm (}and in $C (0, T; B)$
 if $p = \infty${\rm )}.
\end{theorem}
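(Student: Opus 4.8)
The plan is to deduce relative compactness from a Banach-valued Riesz--Kolmogorov compactness criterion, the key structural input being that $F$ is uniformly equicontinuous in time when measured in the weaker norm $\|\cdot\|_Y$, while being uniformly bounded in the stronger norm $\|\cdot\|_X$; the compact embedding $X\subset B$ then interpolates between these two pieces of information. The first ingredient I would record is Ehrling's lemma: for every $\eta>0$ there is a constant $C_\eta$ with $\|v\|_B\le \eta\|v\|_X+C_\eta\|v\|_Y$ for all $v\in X$. I would prove it by contradiction — if it failed there would be $v_n$ with $\|v_n\|_B=1$, $\|v_n\|_X\le 1/\eta$, and $\|v_n\|_Y\to 0$, so compactness of $X\subset B$ would extract a subsequence $v_n\to v$ in $B$ with $\|v\|_B=1$, while the continuous embedding $B\subset Y$ would force $v_n\to 0$ in $Y$ and hence $v=0$, a contradiction. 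This is the only place where compactness of the embedding is used.

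Next I would control time translates. Writing $(\tau_h f)(t)=f(t+h)$ and $f(t+h)-f(t)=\int_t^{t+h}\partial_t f(s)\,ds$, together with Young's inequality for the convolution with $\mathbf{1}_{[0,h]}$, gives the uniform bound $\|\tau_h f-f\|_{L^p(0,T-h;Y)}\le h\,\|\partial_t f\|_{L^p(0,T;Y)}\le Ch$ for every $f\in F$. On the other hand the hypothesis yields the $h$-independent bound $\|\tau_h f-f\|_{L^p(0,T-h;X)}\le 2\|f\|_{L^p(0,T;X)}\le 2R$. Applying Ehrling's lemma pointwise in $t$ to $v=f(t+h)-f(t)$ and integrating, I obtain $\|\tau_h f-f\|_{L^p(0,T-h;B)}\le 2R\eta+C_\eta Ch$. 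Given $\varepsilon>0$ one first fixes $\eta$ so that $2R\eta<\varepsilon/2$ and then $h$ small so that $C_\eta Ch<\varepsilon/2$; this shows $\sup_{f\in F}\|\tau_h f-f\|_{L^p(0,T-h;B)}\to 0$ as $h\to 0$, i.e. $F$ is uniformly equicontinuous in time in the $B$-norm.

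To complete the verification of the criterion I would check that the local means are relatively compact in $B$: for $0<t_1<t_2<T$ the estimate $\|\int_{t_1}^{t_2}f(t)\,dt\|_X\le (t_2-t_1)^{1/p'}\|f\|_{L^p(0,T;X)}\le C$ places $\{\int_{t_1}^{t_2}f\,dt:f\in F\}$ in a bounded subset of $X$, which the compact embedding sends into a relatively compact subset of $B$. Having both the equicontinuity and the compactness of means, the Banach-valued Riesz--Kolmogorov characterization of relatively compact sets in $L^p(0,T;B)$ gives the conclusion for $1\le p<\infty$. For $p=\infty$ I would argue instead by Arzel\`a--Ascoli: the bound on $\partial_t f$ in $L^\infty(0,T;Y)$ makes each $f$ Lipschitz from $[0,T]$ into $Y$ with a uniform constant, the bound in $L^\infty(0,T;X)$ makes $\{f(t)\}$ relatively compact in $B$ for each $t$, and Ehrling's lemma upgrades the $Y$-Lipschitz bound to uniform equicontinuity in $B$; the Banach-valued Arzel\`a--Ascoli theorem then yields relative compactness in $C([0,T];B)$.

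The main obstacle is the compactness characterization itself — the Banach-valued analogue of the Riesz--Kolmogorov--Fr\'echet theorem, asserting that boundedness together with the vanishing of time translates and the relative compactness of local means forces relative compactness in $L^p(0,T;B)$. Establishing this criterion, rather than merely quoting it, is the technical heart of the argument: one introduces the Steklov averages $(\mathcal{A}_h f)(t)=\frac1h\int_t^{t+h}f(s)\,ds$, observes that for fixed $h$ the family $\{\mathcal{A}_h f:f\in F\}$ is relatively compact in $C([0,T-h];B)$ by Arzel\`a--Ascoli — its values lie in the relatively compact set of means and it is equicontinuous in $t$ thanks to the time-translation estimate — and then controls the error $\|\mathcal{A}_h f-f\|_{L^p(0,T;B)}$ uniformly over $F$ by that same estimate, so that $F$ is totally bounded. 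Making every estimate uniform in $f\in F$, and handling the behaviour near the endpoints $t=0$ and $t=T$, is where the care is needed; everything else reduces to the two elementary bounds above and Ehrling's interpolation.
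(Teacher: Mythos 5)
The paper contains no proof of this statement at all: it is imported verbatim as Theorem 5 of Simon's paper \cite{Simon} and used as a black box in the proof of Lemma \ref{sec4-lem3}. So there is no in-paper argument to compare against; what can be said is that your reconstruction follows essentially the same route as the cited source (and as the standard textbook treatments of the Aubin--Lions--Simon lemma): (1) the Ehrling/Lions interpolation inequality $\|v\|_B\le \eta\|v\|_X+C_\eta\|v\|_Y$, correctly proved by contradiction using compactness of $X\subset B$ and continuity of $B\subset Y$; (2) the translation estimate $\|\tau_h f-f\|_{L^p(0,T-h;Y)}\le Ch$ from the bound on $\partial_t f$, upgraded to the $B$-norm by Ehrling, with the correct order of quantifiers (first $\eta$, then $h$); (3) Simon's characterization of relatively compact subsets of $L^p(0,T;B)$ (his Theorem 1) via compactness of the means and uniform smallness of translates, with Arzel\`a--Ascoli replacing it when $p=\infty$. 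This is sound, and you correctly identify that the genuine technical content sits in step (3), which you only sketch. Two points deserve explicit care if you were to write this in full: near the endpoint $t=T$ (for $p<\infty$) total boundedness requires uniform $p$-integrability of $\|f\|_B$, which under the hypotheses of the theorem follows from Ehrling together with the embedding $W^{1,p}(0,T;Y)\subset C([0,T];Y)$, but does not follow from the translation estimate alone; and for $p=\infty$ the bound $\|f(t)\|_X\le R$ holds only for a.e.\ $t$, so one must first pass to the continuous-in-$Y$ representative and argue that $f(t)$ lies in the $B$-closure of the $X$-ball (a fixed compact set of $B$) for \emph{every} $t$ before invoking Arzel\`a--Ascoli. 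Both are standard repairs, and you flag the first one yourself, so I regard the proposal as correct in structure and faithful to the argument of the reference the paper relies on.
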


\begin{lemma}[Compactness in $L^1_{loc}(\R)$ ]\label{sec4-lem3}
For every $0 < t_1 < t_2 < \infty$ and every $R > 0$, the set
$
 \{u_\lambda \}_{\lambda > 0} \subseteq C([t_1 , t_2], L^1 ([-R, R]))
$
is relatively compact.
\end{lemma}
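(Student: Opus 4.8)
The plan is to apply the Aubin--Simon compactness theorem (Theorem~\ref{simon}) on the interval $[t_1,t_2]$ with the exponent $p=\infty$ and with the triple of Banach spaces
\[
X = W^{1,1}([-R,R]), \qquad B = L^1([-R,R]), \qquad Y = W^{-1,1}([-R,R]).
\]
The embedding $X \subset B$ is compact by the Rellich--Kondrachov theorem, while $B\subset Y$ is continuous and $\partial_x$ maps $L^1([-R,R])$ boundedly into $Y$. Thus it suffices to verify the two boundedness hypotheses of Theorem~\ref{simon}, each uniformly in $\lambda>0$.

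First I would check that $\{u_\lambda\}_{\lambda>0}$ is bounded in $L^\infty(t_1,t_2;X)$. This is immediate from Lemmas~\ref{sec4-lem1} and~\ref{sec4-lem2} taken with $p=1$: since $t$ ranges over the compact interval $[t_1,t_2]$ with $t_1>0$, the estimates $\|u_\lambda(t)\|_1\le C$ and $\|\partial_x u_\lambda(t)\|_1\le C t^{-1/2}\le C t_1^{-1/2}$ hold with constants independent of $t\in[t_1,t_2]$ and of $\lambda$. Hence $\sup_{\lambda>0}\sup_{t\in[t_1,t_2]}\|u_\lambda(t)\|_{W^{1,1}([-R,R])}<\infty$.

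The second, and main, step is to bound $\{\partial_t u_\lambda\}$ in $L^\infty(t_1,t_2;Y)$. Here the obstacle is that the term $\partial_x^2 u_\lambda$ in \eqref{eq:lambda} carries two spatial derivatives and is not controlled in $L^1$; to circumvent this I would rewrite the equation in divergence form,
\[
\partial_t u_\lambda = \partial_x g_\lambda, \qquad g_\lambda = \partial_x u_\lambda - u_\lambda\bigl(K^\prime_\lambda \ast u_\lambda\bigr),
\]
so that only a single derivative falls on an $L^1$-bounded quantity. Indeed, $\partial_x u_\lambda$ is bounded in $L^\infty(t_1,t_2;L^1(\R))$ by the previous step, while the H\"older and Young inequalities combined with \eqref{ul:Kl:L1} and Lemma~\ref{sec4-lem1} give
\[
\bigl\|u_\lambda\bigl(K^\prime_\lambda \ast u_\lambda\bigr)(t)\bigr\|_1 \le \|u_\lambda(t)\|_1\,\|K^\prime_\lambda\|_1\,\|u_\lambda(t)\|_\infty \le C\,\|u_0\|_1\,\|K^\prime\|_1\,t^{-1/2},
\]
which is uniformly bounded for $t\in[t_1,t_2]$ and $\lambda>0$. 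Consequently $\{g_\lambda\}$ is bounded in $L^\infty(t_1,t_2;L^1(\R))$, and since $\partial_x$ maps $L^1$ boundedly into $Y=W^{-1,1}([-R,R])$, the family $\{\partial_t u_\lambda=\partial_x g_\lambda\}$ is bounded in $L^\infty(t_1,t_2;Y)$.

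With both hypotheses verified, Theorem~\ref{simon} in the case $p=\infty$ yields that $\{u_\lambda\}_{\lambda>0}$ is relatively compact in $C([t_1,t_2],L^1([-R,R]))$, which is the assertion. The only genuinely delicate point is the divergence-form rewriting that keeps $\partial_t u_\lambda$ in a negative-order space; once this is in place, the rest reduces to the a priori estimates already established in Lemmas~\ref{sec4-lem1}--\ref{sec4-lem2}.
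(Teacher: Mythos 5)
Your proof is correct and follows essentially the same route as the paper: the paper also applies Theorem~\ref{simon} with $p=\infty$ and the triple $X=W^{1,1}([-R,R])$, $B=L^1([-R,R])$, $Y=W^{-1,1}([-R,R])$, and uses Lemmas~\ref{sec4-lem1} and~\ref{sec4-lem2} with $p=1$ for the bound in $L^\infty(t_1,t_2;X)$. Your divergence-form rewriting $\partial_t u_\lambda=\partial_x\bigl(\partial_x u_\lambda-u_\lambda(K'_\lambda\ast u_\lambda)\bigr)$ with the flux bounded in $L^1$ is exactly the paper's duality argument in disguise (the paper pairs $\partial_t u_\lambda$ against $\phi\in C^\infty_c((-R,R))$ and obtains the bound $\|\phi_x\|_\infty\bigl(\|\partial_x u_\lambda(t)\|_1+\|K'_\lambda\|_1\|u_\lambda(t)\|_2^2\bigr)$, differing from yours only in estimating the nonlinear term by $\|u_\lambda\|_2^2$ instead of $\|u_\lambda\|_1\|u_\lambda\|_\infty$), so the two verifications of the second hypothesis of Theorem~\ref{simon} coincide.
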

\begin{proof}
We apply Theorem \ref{simon} with $p=\infty$, $F = \{u_\lambda \}_{\lambda > 0}$, and
\[
 X = W^{1, 1}([-R, R]), \qquad B = L^1 ([-R, R]), \qquad  Y
 = W^{-1, 1}([-R, R]),
\] 
where $R > 0$ is fixed and arbitrary, and $Y$ is the dual space of $W^{1, 1}_0 ([-R, R])$. %
Obviously,  the embedding $X \subseteq B$ is compact by the Rellich-Kondrashov theorem. 

By Lemmas 
 \ref{sec4-lem1} and \ref{sec4-lem2} with $p = 1$, the sets
$
 \{u_\lambda \}_{\lambda > 0} \subseteq L^\infty \left([t_1 , t_2], L^1([-R, R])\right)
$ 
and 
$
 \{\partial_x u_\lambda \}_{\lambda > 0} \subseteq L^\infty \left([t_1 , t_2], L^1([-R, R])\right)
$ 
are bounded.

To check the second condition of Aubin-Simon's compactness criterion, it is suffices to show that there is a
 positive constant $C$ which independent of $\lambda > 0$ such that 
$
 \sup_{t \in [t_1 , t_2]} \|\partial_t u_\lambda \|_Y \le C.
$
Let us show this estimate by a duality argument. 
For every $\phi \in C^\infty_c \left((-R,
 R)\right)$ and  $t \in [t_1 , t_2]$ we have 
\begin{align*}
\left|\int_\R \partial_t u_\lambda (x, t) \phi (x)\, dx \right| &=
 \left|-\int_\R \partial_x u_\lambda \partial_x \phi \, dx + \int_\R
 (\partial_x \phi) u_\lambda (K^\prime_\lambda \ast u_\lambda )\, dx
 \right|\\
 &\leq 
  \|\phi_x \|_\infty( \|u_\lambda(t)\|_1+ \|K^\prime_\lambda\|_1 \|u_\lambda(t)\|_2^2)\\
&\le \|\phi_x \|_\infty C(t_1, t_2, \|K^\prime \|_1 , \|u_0 \|_1)
\end{align*}
by virtue of Lemma \ref{sec4-lem1}. 
Hence, Lemma \ref{sec4-lem3} is proved.
\end{proof}

\begin{lemma}[Compactness in $L^1(\R)$ ] \label{sec4-lem4}
For every $0 < t_1 < t_2 < \infty$, the set
$
 \{u_\lambda \}_{\lambda > 0} \subseteq C([t_1 , t_2], L^1 (\R))
$
is relatively compact.
\end{lemma}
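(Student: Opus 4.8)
The plan is to upgrade the local compactness of Lemma \ref{sec4-lem3} to compactness in $L^1(\R)$ by proving that the family $\{u_\lambda\}$ is \emph{uniformly tight}, i.e.\ that its mass cannot escape to spatial infinity, uniformly in $\lambda$ and in $t\in[t_1,t_2]$. I would rely on the elementary characterization: a family $F\subseteq C([t_1,t_2],L^1(\R))$ is relatively compact provided (a) for each $R>0$ the restricted family is relatively compact in $C([t_1,t_2],L^1([-R,R]))$, and (b) $\sup_{f\in F}\sup_{t\in[t_1,t_2]}\int_{|x|>R}|f(x,t)|\,dx\to 0$ as $R\to\infty$. Property (a) is exactly Lemma \ref{sec4-lem3}, combined with a diagonal extraction over $R\in\N$; hence the whole proof reduces to establishing the tightness (b), which is the main obstacle.

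To prove (b) I would test the rescaled equation \eqref{eq:lambda} against a smooth cut-off. Fix $\phi\in C^\infty(\R)$ with $0\le\phi\le1$, $\phi\equiv0$ on $[-1,1]$ and $\phi\equiv1$ on $\R\setminus[-2,2]$, and set $\phi_R(x)=\phi(x/R)$, so that $\|\phi_R'\|_\infty\le C/R$ and $\|\phi_R''\|_\infty\le C/R^2$. Multiplying \eqref{eq:lambda} by $\phi_R$, integrating over $\R$, and integrating by parts (the boundary terms vanish by the spatial decay of $u_\lambda$) gives
\begin{equation*}
\frac{d}{dt}\int_\R u_\lambda\,\phi_R\,dx=\int_\R u_\lambda\,\phi_R''\,dx+\int_\R u_\lambda\,(K_\lambda'\ast u_\lambda)\,\phi_R'\,dx.
\end{equation*}
The first integral is bounded by $\|\phi_R''\|_\infty\|u_\lambda(t)\|_1\le C R^{-2}\|u_0\|_1$ thanks to \eqref{ul:Kl:L1}, while the second, by Cauchy--Schwarz, Young's convolution inequality, and $\|K_\lambda'\|_1=\|K'\|_1$, is bounded by $\|\phi_R'\|_\infty\|K'\|_1\|u_\lambda(t)\|_2^2\le C R^{-1}\|K'\|_1\|u_\lambda(t)\|_2^2$. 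Invoking the decay estimate \eqref{sec4-eq4:ul} of Lemma \ref{sec4-lem1} with $p=2$, namely $\|u_\lambda(t)\|_2^2\le C t^{-1/2}$, I obtain the $\lambda$-independent bound $\big|\frac{d}{dt}\int_\R u_\lambda\phi_R\,dx\big|\le C R^{-2}+C R^{-1}t^{-1/2}$, whose right-hand side is integrable near $t=0$.

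The final step is to integrate this differential inequality from $s$ to $t\in[t_1,t_2]$ and let $s\to0^+$, which is legitimate since $u_\lambda\in C([0,\infty),L^1(\R))$ and $\tau^{-1/2}$ is integrable; this yields
\begin{equation*}
\int_\R u_\lambda(t)\,\phi_R\,dx\le\int_\R u_{0,\lambda}\,\phi_R\,dx+\frac{C t_2}{R^2}+\frac{C\sqrt{t_2}}{R}.
\end{equation*}
Using $u_{0,\lambda}(x)=\lambda u_0(\lambda x)$ from \eqref{ini:lambda} and the change of variables $y=\lambda x$, the initial term equals $\int_\R u_0(y)\,\phi_R(y/\lambda)\,dy\le\int_{|y|\ge R\lambda}u_0(y)\,dy$. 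Since $\phi_R$ vanishes on $[-R,R]$, the left-hand side dominates $\int_{|x|\ge 2R}u_\lambda(t)\,dx$, so the displayed bound is precisely an estimate of the escaping mass. For $\lambda\ge1$ one has $\int_{|y|\ge R\lambda}u_0\le\int_{|y|\ge R}u_0\to0$ as $R\to\infty$ because $u_0\in L^1(\R)$, whence $\sup_{\lambda\ge1}\sup_{t\in[t_1,t_2]}\int_{|x|\ge 2R}u_\lambda(t)\,dx\to0$, i.e.\ the uniform tightness (b) in the regime $\lambda\to\infty$ that the scaling limit actually requires. I expect the delicate point to be exactly this control of the initial term: the estimate yields tightness for $\lambda$ bounded away from $0$ (in particular for $\lambda\ge1$), which is the range used when passing to the limit $\lambda\to\infty$, whereas for $\lambda\to0$ the rescaled data spread out and one should instead exploit the compactness of the single trajectory $\{u(\cdot,\tau)\}$ of the unscaled solution on compact time intervals.
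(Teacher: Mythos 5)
Your proof is correct and follows essentially the same route as the paper: reduction to uniform tightness via Lemma \ref{sec4-lem3} plus a diagonal argument, the same cut-off tested against equation \eqref{eq:lambda}, the same bound $CR^{-2}+CR^{-1}s^{-1/2}$ from Lemma \ref{sec4-lem1} (the paper estimates the nonlinear term by $\|u_\lambda\|_\infty\|u_\lambda\|_1$ where you use $\|u_\lambda\|_2^2$, an immaterial difference), and the same change of variables reducing the initial term to the tail of $u_0$ for $\lambda\ge 1$. Your closing caveat about small $\lambda$ is also consistent with the paper, whose tightness claim \eqref{sec4-claim1} is likewise asserted only uniformly in $\lambda\ge 1$, which is all that the limit $\lambda\to\infty$ requires.
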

\begin{proof}
Let $\psi \in C^\infty (\R)$ be nonnegative and satisfy $\psi (x)=0$ for $|x|<1$ and $\psi (x)=1$ for $|x|>2$.
Put  $\psi_R (x) = \psi (x/R)$ for every $R>0$.
Since $u$ is nonnegative, in view of Lemma \ref{sec4-lem3}, using a standard diagonal argument, it suffices to show
that 
\begin{equation}\label{sec4-claim1}
 \sup_{t \in [t_1 , t_2]}\|u_\lambda (t) \psi_R \|_1 \to 0 \quad
 \text{as}\quad  R \to \infty,\quad \mbox{uniformly in}\quad  \lambda \ge 1.
\end{equation}

Multiplying the both sides of  equation \eqref{eq:lambda} by $\psi_R$ and
 integrating over $\R$ and from $0$ to $t$,  we
 obtain
\begin{align*}
&\int_\R u_\lambda (x, t)\psi_R (x)\, dx - \int_\R u_\lambda (x, 0)\psi_R
 (x)\, dx \\
&\quad = \int_0^t \int_\R
 \partial_{xx} \psi_R (x) u_\lambda (x, s)\, dxds  + \int_0^t \int_\R \partial_x \psi_R
 (x) \big(u_\lambda(x,s) \left(K^\prime_\lambda \ast u_\lambda 
 \right)(x,s)\big)\, dxds.
\end{align*}
Since $\partial_{xx} \psi (x) = \psi''(x/R) / R^2$ and $\partial_x
 \psi_R (x) = \psi'(x/R)/R$, we have
\begin{equation}\label{ul:weak}
\begin{split}
\int_\R u_\lambda (x, t) \psi_R (x)\, dx  \le & \int_\R u_{\lambda, 0} (x)
 \psi_R (x)\, dx + \frac{\|\psi''\|_\infty}{R^2}  \int_0^t \| u_\lambda (s)\|_1\,ds \\
&+   \frac{\|\psi'\|_\infty}{R} \|    K^\prime_\lambda\|_1 \int_0^t \|u_\lambda (s)\|_\infty \|u_\lambda (s)\|_1\,ds\\
 \le &\int_\R u_{\lambda, 0} (x)
 \psi_{R} (x)\, dx \\
&+
 \frac{t}{R^2}\|\psi_{xx}\|_\infty \|u_0\|_1  +  \frac{\|\psi'\|_\infty \|u_0\|_1 \|    K^\prime\|_1}{R} \int_0^t \|u_\lambda
 (s)\|_\infty  \, ds
 \end{split}
\end{equation}
because $\|u_\lambda (s)\|_1=\|u_0\|_1$.

Now, notice that, by the change of variables, we have 
\[
 \int_{|x| > R} u_{\lambda,
 0}(x)\, dx = \int_{|x| > \lambda R} u_0 (x)\, dx \le \int_{|x| > R} u_0
 (x)\, dx
\]
for all  $\lambda \ge 1$. 
Moreover,  it follows from Lemma \ref{sec4-lem1} that  $\|u_\lambda (s)\|_\infty \le C s^{-1/2}$ with $C$ independent of $\lambda$.
Hence, we see for all $t \in [t_1 , t_2]$  that 
\[
 \int_\R u_\lambda (x, t) \psi_R (x)\, dx \le \int_{|x| > R} u_0 (x)\, dx + C_2\left( \frac{t_2}{R^2} + \frac{t_2^{1/2}}{R} \right),
\]
where $C_2 = C_2 (\|\psi_{xx}\|_\infty , \|\psi_{x}\|_\infty ,
 \|K^\prime \|_1 , \|u_0 \|_1)$ is independent of $\lambda>1$. This 
proves our claim \eqref{sec4-claim1} because $u_0\in L^1(\R)$.
\end{proof}

\begin{lemma}[Initial condition]\label{sec4-lem5}
For every test function $\phi \in C_c^{\infty} (\R)$, there exists $C =
 C(\phi, \|K^\prime \|_1 , \|u_0 \|_1 )$ independent of $\lambda$ such that 
\begin{align}
\left|\int_{\R} u_\lambda (x, t)\phi (x)\, dx - \int_{\R} u_{0,
 \lambda}(x)\phi (x)\, dx \right| \le C\left(t + t^{1/2}\right).
\end{align}
\end{lemma}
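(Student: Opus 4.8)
The plan is to estimate the difference $\int_\R u_\lambda(x,t)\phi(x)\,dx - \int_\R u_{0,\lambda}(x)\phi(x)\,dx$ by testing the evolution equation \eqref{eq:lambda} against the fixed test function $\phi$ and integrating over the time interval $[0,t]$. This is exactly the weak formulation already exploited in the proof of Lemma~\ref{sec4-lem4}, except that there the cutoff $\psi_R$ was used, while here we test against an arbitrary $\phi \in C_c^\infty(\R)$. Concretely, I would multiply \eqref{eq:lambda} by $\phi$, integrate over $\R$, and integrate by parts to move derivatives onto $\phi$, obtaining an identity of the form
\[
\int_\R u_\lambda(x,t)\phi(x)\,dx - \int_\R u_{0,\lambda}(x)\phi(x)\,dx
= \int_0^t\!\!\int_\R \phi''(x)\,u_\lambda(x,s)\,dx\,ds
+ \int_0^t\!\!\int_\R \phi'(x)\,u_\lambda(x,s)\big(K_\lambda'\ast u_\lambda\big)(x,s)\,dx\,ds,
\]
where the signs follow from integrating the diffusion term by parts twice and the transport term once.

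Next I would bound each of the two time integrals separately. For the diffusion term, I would use $|\phi''(x)u_\lambda| \le \|\phi''\|_\infty u_\lambda$ together with the mass identity $\|u_\lambda(s)\|_1 = \|u_0\|_1$ from \eqref{ul:Kl:L1}, giving a bound of order $\|\phi''\|_\infty \|u_0\|_1\, t$. For the nonlinear transport term, I would apply H\"older's inequality and then Young's inequality for the convolution: $|\phi' \, u_\lambda (K_\lambda'\ast u_\lambda)|$ integrates to at most $\|\phi'\|_\infty \|K_\lambda'\|_1 \|u_\lambda(s)\|_2^2$, and by \eqref{ul:Kl:L1} we have $\|K_\lambda'\|_1 = \|K'\|_1$. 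The crucial point is that Lemma~\ref{sec4-lem1} with $p=2$ supplies the bound $\|u_\lambda(s)\|_2^2 \le C s^{-1/2}$ with $C$ independent of $\lambda$, so the integrand in $s$ is integrable near $s=0$ and $\int_0^t s^{-1/2}\,ds = 2t^{1/2}$. This is precisely what produces the $t^{1/2}$ contribution in the claimed estimate.

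Combining the two pieces yields
\[
\left|\int_\R u_\lambda(x,t)\phi(x)\,dx - \int_\R u_{0,\lambda}(x)\phi(x)\,dx\right|
\le C_1 t + C_2 t^{1/2},
\]
with $C_1 = \|\phi''\|_\infty\|u_0\|_1$ and $C_2$ depending on $\|\phi'\|_\infty$, $\|K'\|_1$, and $\|u_0\|_1$ through the constant in Lemma~\ref{sec4-lem1}; taking $C = \max\{C_1,C_2\}$ gives the stated bound $C(t+t^{1/2})$.

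\emph{The main obstacle} is essentially bookkeeping rather than a genuine difficulty: one must verify that all the integrations by parts are legitimate, i.e.\ that there are no boundary contributions. This is guaranteed because $\phi$ has compact support, so every boundary term vanishes, and because the regularity asserted in Theorem~\ref{sec2-th0} (inherited by $u_\lambda$ through scaling) makes $u_\lambda(s)$ smooth enough in $x$ for $s>0$ to justify the manipulations. The only quantitative input that requires care is the uniformity in $\lambda$ of the constant multiplying $t^{1/2}$, which is exactly where the $\lambda$-independent decay estimate \eqref{sec4-eq4:ul} of Lemma~\ref{sec4-lem1} is indispensable; without it the $L^2$-norm bound would degrade as $\lambda\to\infty$ and the estimate would fail to be uniform.
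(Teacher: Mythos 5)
Your proposal is correct and follows essentially the same route as the paper: both test equation \eqref{eq:lambda} against $\phi$, integrate in time, bound the diffusion term by $\|\phi''\|_\infty\|u_0\|_1\,t$ via mass conservation, and control the transport term using the $\lambda$-uniform decay of Lemma~\ref{sec4-lem1}. The only cosmetic difference is your H\"older pairing for the nonlinear term, $\|u_\lambda(s)\|_2\,\|K_\lambda'\ast u_\lambda(s)\|_2\le \|K'\|_1\|u_\lambda(s)\|_2^2\le Cs^{-1/2}$ (Lemma~\ref{sec4-lem1} with $p=2$), whereas the paper uses $\|u_\lambda(s)\|_1\,\|K'\|_1\,\|u_\lambda(s)\|_\infty\le Cs^{-1/2}$ (the $p=\infty$ case); both yield the same $t^{1/2}$ contribution.
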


\begin{proof}
Following the calculations which lead to estimates \eqref{ul:weak} with $\psi_R$ replaced by $\phi \in C_c^{\infty} (\R)$
we obtain 
\begin{align*} 
&\left|\int_{\R} u_\lambda (x, t)\phi (x)\, dx - \int_{\R} u_{0,
 \lambda}(x)\phi (x)\, dx \right| \\%
&\quad \le \int_0^t \|\phi_{xx}\|_\infty \|u_\lambda (s)\|_1 \, ds + \int_0^t
 \|\phi_x \|_\infty \|K^\prime_\lambda \|_1 \|u_\lambda (s)\|_1
 \|u_\lambda (s)\|_\infty \, ds \\
&\quad \le  \|\phi_{xx}\|_\infty \|u_0\|_1 t +  C\|\phi_x \|_\infty
 \|K^\prime \|_1 \|u_0 \|_1 \int_0^t  s^{-1/2}\, ds \\
&\quad \le C(t + t^{1/2}),
\end{align*}
where $C > 0$ is independent of $\lambda$.
\end{proof}

Now, we are in a position to prove our main result on the large time behavior of solutions to problem
\eqref{intro-eq1}-\eqref{intro-eq2}.

\begin{proof}[Proof of Theorem \ref{sec2-th2}]
By Lemma \ref{sec4-lem4}, for every  $0 < t_1 < t_2 < \infty$, 
the family $\{u_\lambda \}_{\lambda > 0}$ is
 relatively compact in $C([t_1 , t_2], L^1 (\R))$ for any $0 < t_1 < t_2
 < \infty$. Consequently, there exists a subsequence of
 $\{u_\lambda\}_{\lambda > 0}$ (not relabeled) and a function $\bar{u}
 \in C((0, \infty), L^1 (\R))$ such that 
\begin{align}
u_\lambda \to \bar{u} \quad \text{in}\ C([t_1, t_2], L^1 (\R)) \quad
 \text{as}\ \lambda \to \infty.  \label{sec4-eq10} 
\end{align}
Passing to a subsequence, we can assume that 
\begin{align}
u_\lambda (x, t) \to \bar{u}(x, t) \quad \text{as}\ \lambda \to \infty  \label{sec4-eq11} 
\end{align}
almost everywhere in $(x, t) \in \R \times (0, \infty)$. %

Now, multiplying equation \eqref{eq:lambda} by a test function $\phi \in
 C^\infty_c (\R \times (0, \infty))$ and integrating the resulting equation over $\R \times
 (0, \infty)$, we obtain the identity
\begin{align}\label{weak:lambda}
-\int_\R \int_0^\infty u_\lambda \phi_t \, ds dx &= \int_\R
 \int_0^\infty u_\lambda \phi_{xx} \, ds dx + \int_\R \int_0^\infty
 u_\lambda (K^\prime_\lambda \ast u_\lambda)\phi_x \, ds dx.
\end{align}

Recall that $K'_\lambda (x) =\lambda K'(\lambda x)$ and $A=\int_\R K'(y)\,dy= \int_\R K'_\lambda (y)\,dy$  
for all $\lambda>0$, hence, 
by the well-known property of an approximation of the identity, we have
$K^\prime_\lambda \ast \bar u(t)\to A \bar u(t)$ in $L^1(\R)$ as $\lambda\to\infty$.
Consequently,
by the Young inequality, \eqref{ul:Kl:L1}, and \eqref{sec4-eq10}, 
we have
\begin{align*}
\|K'_\lambda *u_\lambda(t)-A\bar u(t)\|_1\leq
 \|K'\|_1 \|u_\lambda(t)-\bar u(t)\|_1+ \| K'_\lambda * \bar u (t) -A\bar u(t)\|_1 \to 0
\end{align*}
as $\lambda\to \infty$  for every $t>0$.
Therefore, passing to the limit $\lambda \to \infty$  in equality \eqref{weak:lambda} and using the
 properties of the sequence $\{u_\lambda \}_{\lambda > 0}$ stated in
 \eqref{sec4-eq10} and \eqref{sec4-eq11}, we obtain that $\bar{u}(x, t)$
 is a weak solution of the equation
\[
 \bar{u}_t = \bar{u}_{xx} - A (\bar{u}^2)_x
\]
with $A = \int_\R K^\prime (x)\, dx $. %
Now, notice that, by the change of variables and the dominated convergence theorem, we obtain 
$\int_\R u_{0,\lambda}(x)\phi(x)\,dx   = \int_\R u_{0}(x)\phi(x/\lambda)\,dx  \to M\phi(0)$ as $\lambda\to \infty$. 
Hence,
it follows from Lemma \ref{sec4-lem5} that $\bar{u}(x, 0) = M \delta_0$
 in the sense of bounded measures. %
Thus, $\bar{u}$ is a weak solution of the initial value problem
\begin{align}
&\bar{u}_t = \bar{u}_{xx} - A (\bar{u}^2)_x,
 \label{sec4-eq12}\\
&\bar{u}(x, 0) = M \delta_0.  \label{sec4-eq13}
\end{align}
Since problem \eqref{sec4-eq12}-\eqref{sec4-eq13} has a unique solution (see {\it e.g.} \cite[Sec.~4]{EVZ93}), 
the whole family $\{u_\lambda \}_{\lambda > 0}$ converge to
 $\bar{u}$ in $C((0, \infty), L^1 (\R))$. %

Obviously, if $A = 0$, this limit function is the multiple of
 Gauss-Weierstrass kernel
\begin{equation}\label{heat:kernel}
 \bar{u}(x, t) = M\G(x,t)= M \frac{1}{\sqrt{4 \pi t}} \exp\big(-\frac{|x|^2}{4t}\big).
\end{equation}
For $A \neq 0$, we obtain the self-similar solution $\bar{u} =\U_{M,A}$ 
of the viscous
 Burgers equation, given by the explicit formula \eqref{nonlin:diff}.

Hence, by \eqref{sec4-eq10}, we have
\[
 \lim_{\lambda \to \infty} \|u_\lambda (1) - \bar{u}(1)\|_1 = 0
\]
and, after setting $\lambda = \sqrt{t}$ and using the self-similar form of
  $\bar{u}(x, t) = t^{-1/2} \bar{u}
 (xt^{-1/2}, 1)$, we obtain 
\begin{equation} \label{lim:L1}
  \lim_{t \to \infty} \|u (t) - \bar{u}(t)\|_1 = 0.
\end{equation}

The convergence of $u(\cdot, t)$ towards the self-similar profile in the
 $L^p$-norms for $p\in (1, \infty)$ is the immediate consequence of the H\"older inequality,
  the decay estimate \eqref{sec4-eq1} with $p=\infty$, and  \eqref{lim:L1}.
Indeed,  we have 
 \begin{equation} \label{lim:Lp}
 \|u(t)-\bar u(t)\|_p\leq \big(\|u(t)\|_\infty +\|\bar u (t)\|_\infty\big)^{1-1/p} \|u(t)-\bar u(t)\|_1^{1/p}
= o(t^{-(1-1/p)/2})
\end{equation}
  as  $t\to \infty$.

To complete  the
 proof of Theorem \ref{sec2-th2}, it remains to show the convergence in the $L^\infty$-norm. Here, however,
 it suffices notice the decay estimate $\|u_x(t)\|_2\leq Ct^{-3/4}$, provided by Lemma \ref{sec4-lem2} with $\lambda=1$,
 and the identity $\|\bar u_x(t)\|_2= t^{-3/4}\|\bar u_x(1)\|_2$ resulting  from the explicit formulas \eqref{heat:kernel} and \eqref{nonlin:diff}.
 Hence, by the Gagliardo-Nirenberg-Sobolev inequality and by \eqref{lim:Lp} with $p=2$, we obtain
 \begin{equation*}
 \|u(t)-\bar u(t)\|_\infty \leq C \big(\|u_x(t)\|_2 +\|\bar u_x (t)\|_2\big)^{1/2} \|u(t)-\bar u(t)\|_2^{1/2}
= o(t^{-1/2})
 \end{equation*}
 as  $t\to \infty$. 
\end{proof}


\section{Concentration phenomenon}

\begin{proof}[Proof of Theorem \ref{thm:spike}]
First, notice that equation equation \eqref{intro-eq1} is invariant under the transformation $x\mapsto -x$ because, by the assumptions,
the kernel $K'$ is an odd function.
Hence, by the uniqueness of solutions to problem \eqref{intro-eq1}-\eqref{intro-eq2}, the solution $u_P$ corresponding to
the even and nonnegative initial datum $u_{0,P} $
satisfies $u_P(x, t) = u_P(-x, t)\geq 0$ for all $x\in \R$ and $t>0$. 

We study the evolution
of the first moment
$
 I_P(t) = \int_{\R}u_P(x, t)|x|\, dx
$
following ideas from the recent papers 
\cite{BKL09,KS09}. 
First, notice that integrating by parts we have
\[
 \int_{\R}|x| \partial_x^2 u_{P}(x, t)\, dx =
2 \int_{0}^\infty  x \partial_x^2 u_{P}(x, t)\, dx 
= - 2 \int_0^\infty \partial_x u_{P}(x, t)\, dx =
 2 u_P(0, t),
\]
because $\partial_xu_P(0,t)=0$ for all $t>0$ in the case of the even function $u_P(\cdot, t)$.
 Next, 
by the assumptions, we have  $K^\prime
(x) = \frac{x}{|x|}K^\prime (|x|)$ where
$ \frac{x}{|x|} = \text{sign}\ x$. 
 Hence, using equation \eqref{intro-eq1}, we obtain
\begin{equation}
\label{I:eq}
\begin{split}
\displaystyle\frac{d}{dt} I_P(t) &= \int_{\R} \partial_t u_{P} (x, t)|x|\, dx \\
&= \int_{\R} \left(\partial^2_x u_{P}(x,t) - \partial_x(u_P(x,t)K^\prime \ast u_P(x,t) )\right)|x|\, dx \\
&= 2u_P(0, t) + \int_{\R}\int_{\R}u_P(x, t)u_P(y, t) \frac{x}{|x|}
 \frac{x-y}{|x-y|}K^\prime (|x-y|)\, dxdy \\
&= 2u_P(0, t) +
 \frac{1}{2}\int_{\R}\int_{\R}u_P(x,t)u_P(y,t)\left(\frac{x}{|x|} -
 \frac{y}{|y|}\right)\frac{x-y}{|x-y|}K^\prime (|x-y|)\, dxdy
\end{split}
\end{equation}
by the symmetrization of the double integral on the right-hand side.
Now,
we use the elementary identity
\[
 \left(\frac{x}{|x|} - \frac{y}{|y|}\right)\frac{x-y}{|x-y|} =
 \left(\frac{|x| + |y|}{|x-y|}\right)\left(1-\frac{x}{|x|}\cdot \frac{y}{|y|}\right)
\]
and the inequalities
\[
 1 \le \frac{|x| + |y|}{|x-y|}\quad \text{and}\quad 1-\frac{x}{|x|}\cdot
 \frac{y}{|y|} \ge 0
\]
which are valid for all $x,y\in \R\setminus \{0\}$.
Moreover,  using  the properties of $K'$ stated in Theorem~\ref{thm:spike}, we deduce from  \eqref{I:eq} that 
\begin{equation}\label{spike-eq01}
\begin{split}
\displaystyle\frac{d}{dt}I_P(t) &\le 2u_P(0, t) +
 \frac{1}{2} \iint\limits_{|x| < \delta/2 \atop |y| < \delta/2} u_P(x,t)u_P(y,t)\left(1-\frac{x}{|x|}\cdot
 \frac{y}{|y|}\right)K^\prime (|x-y|)\, dxdy \\
&\le 2u_P(0, t) -
 \frac{\gamma}{2} \iint\limits_{|x| < \delta/2 \atop |y| < \delta/2} u_P(x,t)u_P(y,t)\left(1-\frac{x}{|x|}\cdot
 \frac{y}{|y|}\right)\, dxdy \\
&\le 2u_P(0, t) - \frac{\gamma}{2}\int_{\R}\int_{\R} u_P(x,t)u_P(y,t)\left(1-\frac{x}{|x|}\cdot
 \frac{y}{|y|}\right)\, dxdy \\
&\quad + \gamma \int_{\R}\int_{|y| > \delta/2}  u_P(x,t)u_P(y,t)\left(1-\frac{x}{|x|}\cdot
 \frac{y}{|y|}\right)\, dxdy.
\end{split}
\end{equation}
Note now that 
\[
 \int_{\R} u_P(x, t)\frac{x}{|x|}\, dx = 0 \quad \text{and}\quad \int_{|x|
 \ge \delta/2} u_P(x, t) \frac{x}{|x|}\, dx = 0
\] 
because $u_P(x, t) = u_P(-x, t)$.  
Hence, denoting $M_P= \int_\R u_P(x,t)\;dx$ it follows from \eqref{spike-eq01} that 
\begin{equation}\label{spike-eq2}
\begin{split}
\displaystyle\frac{d}{dt}I_P(t) &\le 2u_P(0, t) - \frac{\gamma}{2}M_P^2 +
 \gamma M_P \int_{|y| > \delta/2}u_P(y, t)\, dy \\
&\le 2u_P(0, t) - \frac{\gamma}{2}M_P^2 +
  \frac{2\gamma }{\delta} M_P I_P(t).
\end{split}
\end{equation}
Now, we use the dependence of the initial datum on $P$. Putting  
$$ M= \int_\R u_0(x)\,dx, \quad 
 I(0)= \int_\R u_0(x)|x|\,dx,$$
 and 
changing the variables
we have
$$
2u_{0,P}(0) - \frac{\gamma}{2}M_P^2 +
  \frac{2\gamma }{\delta} M_P I_P(0)
  =
   2u_0(0)P^3
 -  \frac{\gamma}{2}M^2 P^4 +
   \frac{2\gamma }{\delta} M I(0)P^3<0
$$
if $P>0$ is sufficiently large.
Hence, for such $P$, the right hand side of inequality \eqref{spike-eq2} is negative for $t=0$ .
Consequently
by the continuity of the functions $u_P$ and $I_P$,  
the right hand side of inequality \eqref{spike-eq2} is  negative for every $t\in [0, T]$ with some $T=T(P)>0$. 
This completes the proof of Theorem 
\ref{thm:spike}.
\end{proof}


\begin{thebibliography}{99}

\bibitem{BCL1}
A.L. Bertozzi, J.A. Carrillo, and T. Laurent, {\it Blowup in
	multidimensional aggregation equations with mildly singular
	interaction kernels}, Nonlinearity {\bf 22} (2009), 683--710.
	
\bibitem{BL1}
A.L. Bertozzi and T. Laurent, {\it Finite-time blow-up of
	solutions of an aggregation equation in $\R^n$},
	Comm. Math. Phys. {\bf 274} (2007), 717--735.

\bibitem{BK10}
P. Biler, G. Karch, {\it Blowup of solutions to generalized Keller-Segel model,}
J. Evol. Equ. (2010), published online 6 November 2009.

\bibitem{BKL09}
P.\ Biler,\ G.\ Karch and Ph.\ Lauren\c cot,\ {\it Blowup of solutions to a
	diffusive aggregation model},\ Nonlinearity {\bf 22}
	(2009), 1559--1568.

 \bibitem{BCL09}
A. Blanchet, J.A. Carrillo, Ph. Lauren\c cot, 
{\it Critical mass for a Patlak-Keller-Segel model with degenerate diffusion in higher dimensions},
Calc. Var. Partial Differential Equations {\bf 35} (2009), 133--168.

\bibitem{BDP}
 A. Blanchet, J. Dolbeault, B. Perthame, 
{\it Two dimensional Keller--Segel model: Optimal critical mass and qualitative 
properties of the solutions}, 
Electron. J. Diff. Eqns. 2006, {\bf 44}, 1--33.

\bibitem{BV05}
M. Bodnar, J.J.L. Vel\'azquez, {\it Derivation of macroscopic equations for individual cell-based
models: A formal approach}, Math. Meth. Appl. Sci. {\bf 28} (2005), 1757--1779.

\bibitem{BV06}
M. Bodnar, J.J.L. Vel\'azquez, {\it An integro-differential equation arising as a limit of individual cell-based
models}, J. Differential Equations {\bf 222} (2006),  341--380

\bibitem{CL}
I.L. Chern and T.P. Liu, {\it Convergence to diffusion waves of
solutions for viscous conservation laws}, Comm. Math. Phys. {\bf 110}
(1987), 1103--1133.

\bibitem{EVZ93}
M. Escobedo, J.L. V\'azquez, E. Zuazua,  {\it Asymptotic behaviour and source-type solutions for a diffusion-convection equation,}
Arch. Rational Mech. Anal. {\bf 124} (1993),  43--65.

\bibitem{E06}
C. Escudero, 
{\it The fractional Keller--Segel model}, Nonlinearity {\bf 19} (2006), 2909--2918. 

\bibitem{H07}
T. Hillen, {\it A classification of spikes
and plateaus},  SIAM Review {\bf 49} (2007), 35--51.

\bibitem{HP04}
T. Hillen and A. Potapov, {\it The one-dimensional chemotaxis model: global existence and asymptotic profile},
Math. Methods Appl. Sci. {\bf 27} (2004), 1783--1801.

\bibitem{H81}D. Henry, Geometric Theory of Semilinear Parabolic Equations, in: Lecture Note in Mathematics, Vol.
840, Springer, Berlin, 1981.

\bibitem{IN87}
T. Ikeda, T. Nagai, {\it Stability of localized stationary solutions},  Japan J. Appl. Math. {\bf 4} (1987),  73--97. 

\bibitem{KS09}
G.\ Karch and K.\ Suzuki,\ {\it Blow-up versus global existence of
	solutions to aggregation equation with diffusion}, (2009), 1--14, arXiv:1004.4021.

\bibitem{Kozono-Sugiyama}
H.\ Kozono and Y.\ Sugiyama,\ {\it Local existence and finite time
	blow-up of solutions in the 2-D Keller-Segel system},\ J.\
	Evol.\ Equ. {\bf 8} (2008), 353--378.
	
\bibitem{Lau1}
T.\ Laurent,\ {\it Local and global existence for an aggregation
	equation},\ Commun. Partial Diff. Eqns {\bf 32} (2007), 1941--1964.

\bibitem{Li-Rod1}
D.\ Li and J.\ Rodrigo,\ {\it Finite-time singularities of an
 aggregation equation in $\R^n$ with fractional dissipation},  Comm. Math. Phys. {\bf 287} (2009), 687--703.


\bibitem{LZ10}
D. Li and X. Zhang, {\it On a nonlocal aggregation model with nonlinear diffusion},
Discrete Contin. Dyn. Syst. {\bf 27} (2010), 301--323.

\bibitem{LP}
T.P. Liu and M. Pierre, {\it Source solutions and asymptotic behavior
in conservation laws}, J. Differential Equations {\bf 51} (1984), 419--441.

\bibitem{Murray}
J. D. Murray, Mathematical Biology. II, Spatial Models and Biomedical Applications. vol. 18 of Interdisciplinary Applied Mathematics,
Springer-Verlag, New York, third ed., 2003. 

\bibitem{N2000}
T. Nagai, {\it Behavior of solutions to a parabolic-elliptic system 
modelling chemotaxis}, 
  J. Korean Math. Soc. {\bf 37} (2000), 721--732.

\bibitem{NSU03}
T. Nagai, R. Syukuinn, M. Umesako, {\it Decay properties and asymptotic profiles of bounded solutions to a parabolic
system of chemotaxis in $\R^N$}, Funkcial. Ekvac. {\bf 46} (2003), 383--407.

\bibitem{NY07}
T. Nagai, T. Yamada, {\it Large time behavior of bounded solutions to a parabolic
system of chemotaxis in the whole space}, J. Math. Anal. Appl. {\bf 336} (2007), 704--726.

\bibitem{OY01}
K. Osaki, A. Yagi, {\it Finite dimensional attractor for one-dimensional Keller-Segel equations},
Funkcial. Ekvac., {\bf 44} (2001),  441--469.

\bibitem{pazy}
 A. Pazy,
Semigroups of Linear Operators and Applications to Partial
Differential Equations,
Springer-Verlag,
New York, Berlin, Heidelberg, Tokyo, 1983.


\bibitem{Simon}
J.\ Simon,\ {\it Compact sets in the space $L^p (0, T; B)$},\ Ann.\
	Mat.\ Pura Appl. {\bf 146} (1987), 65--96.
	
\bibitem{S00}
A. Stevens, {\it A stochastic cellular automaton modeling gliding and aggregation of myxobacteria}, SIAM Journal
on Applied Mathematics {\bf 61} (2000), 172--182.

\bibitem{S09}
Y. Sugiyama, 
{\it On $\epsilon$-regularity theorem and asymptotic behaviors of solutions for Keller-Segel systems},  
SIAM J. Math. Anal. {\bf 41} (2009),  1664--1692.

\bibitem{TBL06}
C. M. Topaz, A. L. Bertozzi and M. A. Lewis, {\it A nonlocal continuum model for biological
aggregation}, Bull. Math. Biol., {\bf 68} (2006), 1601--1623.

\bibitem{V02}
{J.L. V\'azquez,}
{\it Asymptotic behaviour for the porous medium equation posed in the
whole space. Dedicated to Philippe B\'enilan\/},
J. Evol. Equ. {\bf 3} (2003), 67--118.

\end{thebibliography}
\end{document}